\numberwithin{equation}{section}
\theoremstyle{plain}
\newtheorem{theorem}{Theorem}[section]
\newtheorem{lemma}{Lemma}[section]
\newtheorem{remark}{Remark}[section]
\newtheorem{corollary}{Corollary}[section]
\newcommand{\thetab}{\bm{\theta}}
\newcommand{\phib}{\bm{\phi}}
\newcommand{\eb}{\bm{e}}
\newcommand{\Thetab}{\bm{\Theta}}
\newcommand{\taub}{\bm{\tau}}
\newcommand{\xb}{\bm{x}}
\newcommand{\Sb}{\bm{S}}
\newcommand{\vb}{\bm{v}}
\newcommand{\Vb}{\bm{V}}
\newcommand{\Gb}{\bm{G}}
\newcommand{\Fb}{\bm{F}}
\title{Bahadur asymptotic efficiency in the zone of moderate deviation probabilities}
\author{Mikhail Ermakov}
\date{April 2025}
\begin{document}
\maketitle

\begin{center}
Institute of Problems of Mechanical Engineering RAS, Bolshoy pr., 61, VO, 199178 St. Petersburg and\\
St. Petersburg State University, Universitetsky pr., 28, Petrodvoretz, 198504 St. Petersburg, RUSSIA
\end{center}

\noindent AMS subject classification: 62F03, 62G10, 62G20\\
\noindent Key words: Bahadur asymptotic efficiency, large deviations, moderate deviations

\footnotetext[1]{The research was supported by the Ministry of Science and Higher Education of Russian Federation (project 124041500008-1).}

\begin{abstract}
We establish an analogue of the lower bound for Bahadur asymptotic efficiency in the zone of moderate deviation probabilities. The assumptions coincide with the assumptions under which the Hajek-Le Cam locally asymptotically minimax lower bound holds. The lower bound for local Bahadur asymptotic efficiency is a special case of this lower bound.
\end{abstract}

\section{Introduction \label{s1}}
Let $X_1,\ldots,X_n$ be independent identically distributed random variables having probability measure $\mathbf{P}_{\thetab}$, $\thetab \in \Thetab$, defined on probability space $(S,\mathcal{B})$. The set $\Thetab$ is an open subset of $\mathbb{R}^d$. The value of parameter $\thetab$ is unknown. We are interested in the lower bounds of asymptotic efficiency for estimators of parameter $\thetab$.

There are two approaches for the study of lower bounds for asymptotic efficiency. One of them is a locally asymptotically minimax lower bound. In this setting we get the lower bound of the supremum of risks of statistical estimators for any small neighborhood of the true value of parameter. Such a lower bound of asymptotic efficiency does not provide precise information about the behavior of the risk of statistical estimator for a specific value of the parameter. The Hajek-Le Cam \cite{ha,le,ih,va} locally asymptotically minimax lower bound specifies a lower bound on the asymptotic efficiency for estimators $\hat\theta_n= \hat\theta_n(X_1,\ldots,X_n)$ that deviate from the true value of the parameter $\theta$ by an order of $n^{-1/2}$. Locally asymptotically minimax lower bounds for the risks in the zone of large deviation probabilities have been established by Pukhalskii and Spokoiny \cite{spok}.

The lower bound for the Bahadur asymptotic efficiency \cite{ba60,ba67,ba80,ih} provides the lower bounds for the risks of statistical estimators at each specific point of possible parameter values. In this case, only consistency of estimates is assumed. However this lower bound works only for probabilities of large deviations of statistical estimators. In confidence estimation the significance level is small. This causes the interest to the study of large and moderate deviation probabilities of estimators.

In the zone of moderate deviation probabilities, one can prove lower bounds for asymptotic efficiency both in the locally asymptotically minimax setting and in the Bahadur setting. Lower bounds for locally asymptotically minimax risks of statistical estimators have been established in \cite{er03,er12,ra}. For logarithmic asymptotics, locally asymptotically minimax lower bounds for the moderate deviation probabilities were obtained under the same assumptions \cite{er03,ra} as the lower bound for the local asymptotic efficiency of Hajek-Le Cam \cite{ha,le,ih,va}. For the strong asymptotics of moderate deviation probabilities of statistical estimators, the lower bound of the local asymptotically minimax risks has been proved under not very strong additional assumptions \cite{er03,er12}.

The goal of present paper is to obtain an analogue of the Bahadur lower bound for asymptotic efficiency of estimators in the zone of moderate deviation probabilities assuming the same conditions as in the lower bound for the Hajek-Le Cam local asymptotic efficiency. Note that the straightforward application of Bahadur method of the proof leads to significant additional conditions \cite{fu,ih} even for establishing the lower bound for the local Bahadur asymptotic efficiency. The lower bound for the local Bahadur asymptotic efficiency is a special case of the lower bound for the Bahadur asymptotic efficiency in the zone of moderate deviations. For $\Theta \subset \mathbb{R}^1$, we also note that Bahadur lower bound holds separately on each side of the exterior of the confidence interval. A multidimensional analogue of these one-sided lower bounds was also obtained.

The paper is organized as follows. All lower bounds for parametric estimation are collected in Section~\ref{o2}. In Subsection~\ref{o21} we introduce the condition under which these results are proved. This condition is the same one under which the locally asymptotically minimax Hajek-Le Cam lower bound is established. In Subsection~\ref{o22}, for the sake of completeness of presentation and better understanding of further results, a locally asymptotically minimax lower bound for moderate deviation probabilities of statistical estimates is presented. In Subsection~\ref{o23}, the analogue of the lower bound for Bahadur asymptotic efficiency of statistical estimators is provided in the zone of moderate deviation probabilities. The lower bounds for Bahadur local asymptotic efficiency are particular cases of these ones. These results are provided for the case of independent identically distributed observations. In Subsection~\ref{o231} we extend these results to the inhomogeneous case. In Subsection~\ref{o232} we show that the same results hold for the problem of parameter estimation of signal in Gaussian white noise. In Subsection~\ref{o24} we give a generalization of Bahadur lower bounds to the multivariate case, covering ``one-sided'' lower bounds, and show that its proof is no different from the proof of traditional Bahadur lower bounds on the asymptotic efficiency \cite{ba80, ih}. In Section~\ref{o3} we extend these results to the case of estimation of differentiable statistical functional value. In Section~\ref{o4} we provide the proof that the local Bahadur efficiency is a particular case of the efficiency for moderate deviation probabilities.

We use letters $c$ and $C$ as a generic notation for positive constants. For the case of multidimensional parameter $\Thetab \subset \mathbb{R}^d$, $d > 1$, we shall denote by bold letters $\thetab, \taub, \phib, \ldots$ vectors or vector functions. For $\Theta \subset \mathbb{R}^1$, for similar notation we shall implement letters $\theta, \tau, \phi, \ldots$. For any vector $\taub \in \mathbb{R}^d$ we denote by $\taub^T$ the transposed vector. We denote $\mathbf{1}(A)$ the indicator of an event $A$.
For any two sequences of positive real numbers $a_n$ and $b_n$, $a_n = o(b_n)$ implies $a_n/b_n \to 0$ as $n \to \infty$.

\section{Lower bounds for parametric estimation \label{o2}}
\subsection{Main Assumption \label{o21}}
Suppose probability measures $\mathbf{P}_{\thetab}$, $\thetab \in \Thetab$, are absolutely continuous with respect to probability measure $\mu$, defined on the same $\sigma$-algebra $\mathcal{B}$ of set $S$, and have the densities
\begin{equation*}
f(x,\thetab) = \frac{d \mathbf{P}_{\thetab}}{d\mu} (x), \quad x \in S.
\end{equation*}
For any $\thetab,\thetab_0\in \Thetab$, denote $\mathbf{P}^a_{\thetab, \thetab_0}$ and $\mathbf{P}^s_{\thetab, \thetab_0}$ absolutely continuous and singular components of probability measure $\mathbf{P}_{\thetab}$ with respect to probability measure $\mathbf{P}_{\thetab_0}$.
For all $\thetab_0,\thetab_0 + \taub \in \Thetab$ define the function
\begin{equation*}
g(x,\taub)= g(x,\thetab_0,\thetab_0 +\taub)=  \left(\frac{f(x,\thetab_0+\taub)}{f(x,\thetab_0)}\right)^{1/2} -1
\end{equation*}
for all $x$ belonging to the support of measure $\mathbf{P}^a_{\thetab_0+\taub, \thetab_0}$ and equals to zero otherwise.

We say that statistical experiment $\mathcal{E}= \{(S,\mathcal{B}), \mathbf{P}_{\thetab}, \thetab \in \Thetab\}$ has the finite Fisher information at the point $\thetab_0 \in \Thetab$, if there exists vector function $\phib\, :\, S \to \mathbb{R}^d$ such that we have
\begin{equation}\label{s21}
\int_S(g(x,\taub) - \taub^T\,\phib(x))^2\,d\mathbf{P}_{\thetab_0} = o(|\taub|^2), \quad \mathbf{P}^s_{\thetab_0,\thetab_0 + \taub}(S) = o(|\taub|^2)
\end{equation}
as $\taub \to 0$ and the matrix
\begin{equation*}
I(\thetab_0) = 4 \int_S \phib\,\phib^T \, d\mathbf{P}_{\thetab_0}
\end{equation*}
is positive definite.
Matrix $I(\thetab_0)$ is called Fisher information matrix.
We explore moderate deviation probabilities for the zone of moderate deviations defined by a sequence $u_n > 0$, $u_n \to 0$, $n u_n^2 \to \infty$ as $n \to \infty$.
We establish lower bounds for Bahadur efficiency for some fixed point $\thetab_0 \in \Thetab$.
We shall omit index $\thetab_0$ in symbols of $\mathbf{E}_{\thetab_0}$ and $\mathbf{P}_{\thetab_0}$.

\subsection{Locally asymptotically minimax lower bound \label{o22}}
Locally asymptotically minimax lower bound for risks in the zone of moderate deviation probabilities does not require any conditions of consistency.
\begin{theorem}\label{t0} 
Let $d=1$ and let the statistical experiment $\mathcal{E}= \{(S,\mathcal{B}), \mathbf{P}_\theta, \theta \in \Theta\}$ have the finite Fisher information at the point $\theta_0 \in \Theta$. Then, for any estimator $\hat\theta_n$, for the points $\theta_0,\theta_n=\theta_0 +2u_n \in \Theta$, we have
\begin{equation}\label{s2201}
\liminf_{n\to \infty}\sup_{\theta=\theta_0,\theta_n}(nu_n^2I(\theta)/2)^{-1} \log \mathbf{P}_\theta (\,|\hat\theta_n - \theta| > u_n) \ge - 1.
\end{equation}
\end{theorem}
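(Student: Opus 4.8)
The plan is to prove \eqref{s2201} by reducing the supremum over the two points $\theta_0,\theta_n=\theta_0+2u_n$ to a single moderate deviation probability for the log-likelihood ratio, and then bounding that probability from below by an exponential change of measure to the ``midpoint'' (Bhattacharyya) measure, so that only the second-order local structure encoded in \eqref{s21} is used.

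First I would exploit the geometry of the two balls. Writing $A_0=\{|\hat\theta_n-\theta_0|>u_n\}$ and $A_n=\{|\hat\theta_n-\theta_n|>u_n\}$, the intervals $[\theta_0-u_n,\theta_0+u_n]$ and $[\theta_n-u_n,\theta_n+u_n]$ overlap only at the single point $\theta_0+u_n$, so $A_0^c\subseteq A_n$ up to a null set, whence $\mathbf 1(A_0)+\mathbf 1(A_n)\ge1$ almost everywhere. Let $Z_n=\prod_{i=1}^n f(X_i,\theta_n)/f(X_i,\theta_0)$ and $\Lambda_n=\log Z_n$ denote the likelihood ratio and its logarithm on the absolutely continuous part. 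Treating $\mathbf 1(A_0)$ as a test of $\theta_0$ against $\theta_n$ and discarding the singular component, I obtain
\[
\sup_{\theta=\theta_0,\theta_n}\mathbf P_\theta(|\hat\theta_n-\theta|>u_n)\ge\tfrac12\bigl(\mathbf P(A_0)+\mathbf P_{\theta_n}(A_0^c)\bigr)\ge\tfrac12\,\mathbf E[\min(1,Z_n)]\ge\tfrac12\,\mathbf P(\Lambda_n\ge0),
\]
using $\mathbf 1(A_0)+Z_n\mathbf 1(A_0^c)\ge\min(1,Z_n)\ge\mathbf 1(\Lambda_n\ge0)$. Thus everything reduces to the moderate deviation bound $\mathbf P(\Lambda_n\ge0)\ge\exp(-(1+o(1))\,nu_n^2 I(\theta_0)/2)$.

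For this I would tilt to the measure $Q_n$, the $n$-fold product of $q_n=\rho_n^{-1}\sqrt{f(\cdot,\theta_0)f(\cdot,\theta_n)}$ with $\rho_n=\int_S\sqrt{f(\cdot,\theta_0)f(\cdot,\theta_n)}\,d\mu=1+\mathbf E[g(\cdot,2u_n)]$. Since $\log(f(\cdot,\theta_n)/f(\cdot,\theta_0))=2\log(1+g)$, the density of $\mathbf P^{\otimes n}$ relative to $Q_n$ is $\rho_n^n e^{-\Lambda_n/2}$, so
\[
\mathbf P(\Lambda_n\ge0)=\rho_n^n\,\mathbf E_{Q_n}\bigl[e^{-\Lambda_n/2}\mathbf 1(\Lambda_n\ge0)\bigr]\ge\rho_n^n\,e^{-\delta_n/2}\,Q_n(0\le\Lambda_n\le\delta_n)
\]
for any $\delta_n>0$. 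Relation \eqref{s21} gives $\mathbf E[g^2]=(1+o(1))u_n^2 I(\theta_0)$ together with $2\mathbf E[g]+\mathbf E[g^2]=o(u_n^2)$, hence $\log\rho_n=-(1+o(1))u_n^2 I(\theta_0)/2$ and $\rho_n^n=\exp(-(1+o(1))nu_n^2 I(\theta_0)/2)$, which is exactly the target exponent. It then remains to choose $\delta_n\to\infty$ with $\delta_n=o(nu_n^2)$ and to show that $Q_n(0\le\Lambda_n\le\delta_n)$ decays no faster than $\exp(o(nu_n^2))$; the factors $e^{-\delta_n/2}$ and $Q_n(\cdots)$ are then absorbed into the $o$-term, and \eqref{s2201} follows after dividing by the relevant normalization and noting $I(\theta_n)\to I(\theta_0)$.

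The main obstacle is precisely the control of $\Lambda_n$ under $Q_n$. The summands $\ell_i=2\log(1+g(X_i,2u_n))$ form a triangular array, and the self-normalizing choice $q_n\propto\sqrt{f(\cdot,\theta_0)f(\cdot,\theta_n)}$ makes them essentially centered, $\mathbf E_{Q_n}\ell_i=o(u_n^2)$, with $\mathrm{Var}_{Q_n}\Lambda_n=(4+o(1))nu_n^2 I(\theta_0)$; a Lindeberg central limit theorem (or merely a moderate deviation lower estimate) would then show that a window of width $\delta_n$ about the mean carries mass bounded below on the logarithmic scale. The delicate point is that \eqref{s21} furnishes only an $L^2$ (differentiability-in-quadratic-mean) control of $g-2u_n\phi$ together with $\phi\in L^2(\mathbf P)$, and no higher moments of the log-likelihood increments. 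Consequently the variance computation, the estimate $\mathbf E_{Q_n}\ell_i=o(u_n^2)$, and the Lindeberg condition must all be established after truncating $g$ on $\{|g|\le\eta\}$ and showing that the discarded part contributes $o(u_n^2)$ by dominated convergence, using that $\{|g|>\eta\}\subseteq\{|\phi|>\eta/(4u_n)\}$ up to the $L^2$-error. This truncation step, rather than the change-of-measure bookkeeping, is where the real work lies.
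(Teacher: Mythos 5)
Your plan is structurally sound and, unlike the paper, self-contained: the paper disposes of Theorem~\ref{t0} in a few lines by quoting the hypothesis-testing bound \eqref{s270} (Theorem 2.2 of \cite{er03}), applying it to the test $K_n=\mathbf 1(\hat\theta_n-\theta_0>u_n)$ via \eqref{s271}--\eqref{s272}. You instead re-derive only the balanced point of the $(\alpha,\beta)$ tradeoff --- which is all Theorem~\ref{t0} needs --- through $\alpha+\beta\ge\mathbf E[\min(1,Z_n)]\ge\mathbf P(\Lambda_n\ge0)$ and a symmetric Hellinger tilt. That is a legitimate and more transparent route, and your extraction of $\rho_n^n=\exp\{-(1+o(1))nu_n^2I(\theta_0)/2\}$ from \eqref{s21} is correct: it is exactly the Hellinger-affinity computation that makes the target exponent appear.

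The gap is in the final step, and it is not where you locate it. You need $Q_n(0\le\Lambda_n\le\delta_n)=\exp\{o(nu_n^2)\}$ and propose to get it from a truncated Lindeberg CLT, on the grounds that $\Lambda_n$ is ``essentially centered'' under $Q_n$. But all that \eqref{s21} yields is $m_n:=\mathbf E_{Q_n}\Lambda_n=n\cdot o(u_n^2)=o(nu_n^2)$, with no rate, because the $o(|\taub|^2)$ in \eqref{s21} carries none. Since $\operatorname{Var}_{Q_n}\Lambda_n\asymp nu_n^2$, the standardized offset $|m_n|/(\operatorname{Var}_{Q_n}\Lambda_n)^{1/2}=o((nu_n^2)^{1/2})$ may tend to infinity, so the window $[0,\delta_n]$ can lie arbitrarily many standard deviations from the tilted mean and a CLT gives no lower bound on its mass. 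What is actually required is a moderate-deviation \emph{lower} estimate for the tilted triangular array at scales up to $o((nu_n^2)^{1/2})$ standard deviations (or, equivalently, a saddle-point tilt $f_0^{1-s}f_n^{s}$ with $s$ chosen to kill the mean exactly, which pushes the same analysis into the normalizing constant). That estimate --- not the truncation, which is routine and mirrors \eqref{qq26}--\eqref{qq29} --- is the real content, and it is precisely what the cited Theorem 2.2 of \cite{er03} and Lemma~\ref{lem1} supply. The bound one ultimately gets is $\exp\{-O(m_n^2/\operatorname{Var}_{Q_n}\Lambda_n)\}=\exp\{-o(nu_n^2)\}$, so your plan does close, but the CLT step as written does not prove it. Two minor blemishes you share with the paper: $A_0^c\subseteq A_n$ fails on $\{\hat\theta_n=\theta_0+u_n\}$, which need not be null (fix by taking $\theta_n=\theta_0+ru_n$, $r>2$, and letting $r\downarrow2$), and replacing $I(\theta_0)$ by $I(\theta)$ under the supremum tacitly uses $I(\theta_n)\to I(\theta_0)$.
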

The proof is based on the following lower bound for efficiency in hypothesis testing. Let us consider the problem of hypothesis testing $H_0: \theta = \theta_0$ versus alternative $H_n: \theta = \theta_0+v_n$, $v_n = 2 u_n$. For any test $K_n$, denote $\alpha(K_n)$ and $\beta(K_n)$ respectively type I and type II error probabilities of test $K_n$. By Theorem 2.2 in \cite{er03}, for any test $K_n$, if $\alpha(K_n) < c < 1$ and $\beta(K_n) < c < 1$, then we have
\begin{equation}\label{s270}
\limsup_{n\to \infty}(nv_n^2I(\theta_0))^{-1/2}(|2 \log \alpha(K_n)|^{1/2} + |2 \log \beta(K_n)|^{1/2}) \le 1.
\end{equation}

\begin{proof}[Proof of Theorem~\ref{t0}] 
Proof of \eqref{s270} is based on the fact that \eqref{s270} is valid for testing a similar hypothesis for the normal distribution by virtue of the Neyman-Pearson Lemma and the extension of a certain version of the statement about local asymptotic normality to the zone of moderate deviations. The lower bound \eqref{s270} has been proved simultaneously with the author in \cite{bm} with rather severe assumptions.

Define the test $K_n = K_n(X_1,\ldots,X_n) = \mathbf{1}(\hat\theta_n -\theta_0 > u_n)$.
We have
\begin{equation}\label{s271}
\alpha(K_n) \le \mathbf{P}_{\theta_0}(|\theta_n - \theta_0| > u_n)
\end{equation}
and
\begin{equation}\label{s272}
\beta(K_n) = \mathbf{P}_{\theta_n}(\hat\theta_n - \theta_0 < u_n) = \mathbf{P}_{\theta_n}(\hat\theta_n - \theta_0 - 2  u_n< - u_n)\le \mathbf{P}_{\theta_n}(|\hat\theta_n - \theta_n|> u_n).
\end{equation}
By \eqref{s270}--\eqref{s272}, we get \eqref{s2201}.
\end{proof}

\subsection{Bahadur efficiency for moderate deviations. Independent identically distributed random observations \label{o23}}
We say that estimator $\hat\thetab_n=\hat\thetab_n(X_1,\ldots,X_n)$ of parameter $\thetab \in \Thetab$ is $u_n$-consistent, if, for any $\thetab_0\in \Thetab$, there is a vicinity $U$ of $\thetab_0$, such that, for any $\delta > 0$, we have
\begin{equation*}
\lim_{n\to \infty} \sup_{\theta \in U} \mathbf{P}_\theta(\,|\hat\thetab_n - \thetab| > \delta u_n) = 0.
\end{equation*}
Let us present an analogue of lower bound for Bahadur efficiency in the zone of moderate deviation probabilities for $d=1$.
\begin{theorem}\label{t1}  
Let statistical experiment $\mathcal{E}= \{(S,\mathcal{B}), \mathbf{P}_\theta, \theta \in \Theta\}$ have the finite Fisher information at the point $\theta_0 \in \Theta\subset \mathbb{R}^1$. Let estimator $\hat \theta_n$ be $u_n$-consistent. Then, for any point $\theta_0 \in \Theta$, we have
\begin{equation}\label{s23}
\liminf_{n\to \infty}(nu_n^2I(\theta)/2)^{-1} \log \mathbf{P} (\,|\hat\theta_n - \theta_0| > u_n) \ge - 1.
\end{equation}
Moreover, we have
\begin{equation}\label{s24}
\liminf_{n\to \infty}(nu_n^2I(\theta)/2)^{-1} \log \mathbf{P} (\,\hat\theta_n - \theta_0 > u_n) \ge - 1.
\end{equation}
\end{theorem}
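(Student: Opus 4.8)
The plan is to reduce the estimation bound to the testing bound \eqref{s270}, exactly as in the proof of Theorem~\ref{t0}, but with the alternative chosen so close to $\theta_0$ that $u_n$-consistency forces the type II error to vanish and can simply be discarded. Fix $\delta > 0$, put $v_n = (1+\delta)u_n$ and $\theta_n = \theta_0 + v_n$, and consider testing $H_0:\theta = \theta_0$ against $H_n:\theta = \theta_n$ with the same test $K_n = \mathbf{1}(\hat\theta_n - \theta_0 > u_n)$ used before. The point of taking $v_n=(1+\delta)u_n$ rather than the $v_n=2u_n$ of Theorem~\ref{t0} is that here consistency is used only to kill $\beta(K_n)$, and shrinking the excess $\delta u_n$ of the alternative over the rejection threshold recovers the sharp constant.

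First I would check the two error probabilities. Since the test rejects on $\{\hat\theta_n - \theta_0 > u_n\}$, we have $\alpha(K_n) = \mathbf{P}(\hat\theta_n - \theta_0 > u_n) \le \mathbf{P}(|\hat\theta_n - \theta_0| > u_n)$, which tends to $0$ by $u_n$-consistency at $\theta_0$. For the type II error, writing $\hat\theta_n - \theta_0 = (\hat\theta_n - \theta_n) + (1+\delta)u_n$ gives
\[
\beta(K_n) = \mathbf{P}_{\theta_n}(\hat\theta_n - \theta_0 \le u_n) = \mathbf{P}_{\theta_n}(\hat\theta_n - \theta_n \le -\delta u_n) \le \mathbf{P}_{\theta_n}(|\hat\theta_n - \theta_n| > \delta u_n),
\]
which tends to $0$ by $u_n$-consistency applied at deviation level $\delta$ (the point $\theta_n$ lies in the neighbourhood $U$ of $\theta_0$ for $n$ large, since $v_n \to 0$). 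In particular both $\alpha(K_n)$ and $\beta(K_n)$ eventually fall below any $c<1$, so \eqref{s270} applies to the sequence $K_n$.

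Next I would feed this into \eqref{s270}. With $v_n = (1+\delta)u_n$ we have $nv_n^2 = (1+\delta)^2 nu_n^2$, and since $|2\log\beta(K_n)|^{1/2} \ge 0$ may be dropped from the left-hand side, \eqref{s270} yields
\[
|2\log\alpha(K_n)|^{1/2} \le (1+o(1))\bigl((1+\delta)^2 nu_n^2 I(\theta_0)\bigr)^{1/2}.
\]
Squaring and rearranging gives $\log\alpha(K_n) \ge -(1+o(1))(1+\delta)^2 nu_n^2 I(\theta_0)/2$, hence $\liminf_{n}(nu_n^2 I(\theta_0)/2)^{-1}\log\alpha(K_n) \ge -(1+\delta)^2$. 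Crucially, $\alpha(K_n)=\mathbf{P}(\hat\theta_n - \theta_0 > u_n)$ does not depend on $\delta$ (only the alternative does), so letting $\delta \to 0$ in this fixed quantity yields \eqref{s24}. Finally \eqref{s23} follows at once, since $\{\hat\theta_n - \theta_0 > u_n\} \subset \{|\hat\theta_n - \theta_0| > u_n\}$ forces $\mathbf{P}(|\hat\theta_n - \theta_0| > u_n) \ge \mathbf{P}(\hat\theta_n - \theta_0 > u_n)$.

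The delicate point is precisely the realization that discarding the positive term $|2\log\beta(K_n)|^{1/2}$ costs nothing in the constant, because the constant is governed by $v_n$, which we drive down to $u_n$; this is what lets us bypass the classical Bahadur change-of-measure argument and its attendant regularity conditions, the only inputs being $u_n$-consistency and the moderate-deviation testing bound \eqref{s270}. The one thing that must be verified with care is that the hypotheses of \eqref{s270} (both errors bounded away from $1$) genuinely hold, which is immediate from consistency as shown above.
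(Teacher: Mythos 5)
Your proposal is correct and follows essentially the same route as the paper: reduce to the moderate-deviation testing bound \eqref{s270} with an alternative at distance $ru_n$ ($r=1+\delta>1$), use $u_n$-consistency to verify that both error probabilities are eventually bounded away from $1$, discard the $\beta$-term, and let $r\to 1$. The only (harmless, slightly cleaner) deviation is that you obtain \eqref{s23} as an immediate consequence of \eqref{s24} via monotonicity, whereas the paper proves \eqref{s23} directly with the two-sided test $\mathbf{1}(|\hat\theta_n-\theta_0|>u_n)$ and then handles \eqref{s24} separately.
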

The multidimensional version of Theorem~\ref{t1} is provided below. In the following Theorems~\ref{t2} and \ref{t3} the asymptotic efficiency varies in different directions. This is the main distinguishing feature of theorems.
\begin{theorem}\label{t2}  
Let the statistical experiment $\mathcal{E}= \{(S,\mathcal{B}), \mathbf{P}_{\thetab}, \thetab \in \Thetab\}$ have the finite Fisher information at the point $\thetab_0 \in \Thetab\subset \mathbb{R}^d$. Let estimator $\hat \thetab_n$ be $u_n$-consistent. Then, for any point $\thetab_0 \in \Thetab$, for any open set $\Vb \subset \mathbb{R}^d$, we have
\begin{equation}\label{s25}
\liminf_{n\to \infty}(nu_n^2)^{-1} \log \mathbf{P} (\,\hat\thetab_n - \thetab_0 \in u_n  \Vb) \ge - \frac{1}{2}\inf_{\taub \in \Vb} \taub^T I(\thetab_0)\taub
\end{equation}
\end{theorem}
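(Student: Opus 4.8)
The plan is to reduce the multivariate lower bound \eqref{s25} to a one-dimensional testing problem along a cleverly chosen direction, mimicking the structure already exploited in the proof of Theorem~\ref{t0}. Since $\Vb$ is open, if $\inf_{\taub \in \Vb}\taub^T I(\thetab_0)\taub$ is attained (or approached) at some $\taub_* \in \Vb$, I would fix a direction and set up a binary hypothesis test between $H_0:\thetab=\thetab_0$ and $H_n:\thetab=\thetab_0+u_n\taub_*$. The event $\{\hat\thetab_n - \thetab_0 \in u_n\Vb\}$ is large under $H_n$ (by $u_n$-consistency, since $u_n\taub_*$ is interior to $u_n\Vb$ and $\hat\thetab_n$ concentrates near $\thetab_0+u_n\taub_*$ under $H_n$), so a test rejecting $H_0$ on this event has type~II error $\beta(K_n) \to 0$, while under $H_0$ the probability $\mathbf{P}(\hat\thetab_n-\thetab_0\in u_n\Vb)$ is exactly the quantity to be bounded. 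The first step, then, is to formalize the reduction: define $K_n = \mathbf{1}(\hat\thetab_n - \thetab_0 \in u_n\Vb)$ and record that $\alpha(K_n) = \mathbf{P}(\hat\thetab_n-\thetab_0\in u_n\Vb)$ and $\beta(K_n) = \mathbf{P}_{\thetab_0+u_n\taub_*}(\hat\thetab_n-\thetab_0\notin u_n\Vb)$.

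Second, I would invoke the multivariate analogue of the testing lower bound \eqref{s270}. The one-dimensional version asserts that along the alternative $\theta_0+v_n$ the combined error exponent is governed by $nv_n^2 I(\theta_0)$; the multivariate statement should read that for the alternative $\thetab_0+u_n\taub_*$ the relevant normalizing quantity is $nu_n^2\,\taub_*^T I(\thetab_0)\taub_*$, i.e.\ the Fisher information contracted in the chosen direction. The bound \eqref{s270} then gives, when $\beta(K_n)\to 0$ so that $|2\log\beta(K_n)|^{1/2}$ dominates in a controlled way, that
\begin{equation*}
\limsup_{n\to\infty}(nu_n^2\,\taub_*^T I(\thetab_0)\taub_*)^{-1/2}\,|2\log\alpha(K_n)|^{1/2} \le 1,
\end{equation*}
which upon squaring and rearranging yields
\begin{equation*}
\liminf_{n\to\infty}(nu_n^2)^{-1}\log\alpha(K_n) \ge -\tfrac12\,\taub_*^T I(\thetab_0)\taub_*.
\end{equation*}

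Third, I would optimize the direction. Since $\taub_*$ may be chosen as any point of the open set $\Vb$, taking the infimum over $\taub\in\Vb$ on the right-hand side (and passing to a minimizing sequence $\taub_k\in\Vb$ approaching the infimum, using openness to keep each $\taub_k$ interior so that the $u_n$-consistency argument for $\beta(K_n)\to 0$ goes through) produces exactly the bound $-\tfrac12\inf_{\taub\in\Vb}\taub^T I(\thetab_0)\taub$ claimed in \eqref{s25}. Here the $u_n$-consistency hypothesis is what guarantees $\beta(K_n)\to 0$: under $\mathbf{P}_{\thetab_0+u_n\taub_k}$ the estimator $\hat\thetab_n$ lands within $\delta u_n$ of $\thetab_0+u_n\taub_k$ with probability tending to one, and for small enough $\delta$ this ball is contained in $u_n\Vb$.

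The main obstacle I anticipate is twofold. First, one must establish the multivariate testing lower bound that plays the role of \eqref{s270} with the directional Fisher information $\taub_*^T I(\thetab_0)\taub_*$; this requires extending the moderate-deviation local asymptotic normality argument underlying Theorem~2.2 of \cite{er03} to a fixed direction in $\mathbb{R}^d$, which amounts to projecting the log-likelihood ratio onto $\taub_*^T\phib$ and controlling the remainder uniformly using the finite-Fisher-information condition \eqref{s21}. Second, care is needed at the boundary of $\Vb$: since the infimum over the open set $\Vb$ need not be attained, the minimizing sequence must stay strictly inside $\Vb$, and I must verify that the $\delta$-ball concentration from $u_n$-consistency remains inside $u_n\Vb$ for each fixed $\taub_k$ before passing to the limit. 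Handling the order of limits—first $n\to\infty$ for fixed $\taub_k$, then $k\to\infty$—correctly is where the argument is most delicate.
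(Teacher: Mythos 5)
Your proposal is correct and follows essentially the same route as the paper: the paper likewise defines the test $K_n=\mathbf{1}(\hat\thetab_n-\thetab_0\in u_n\Vb)$, places the alternative at $\thetab_0+ru_n\taub_1$ with $\taub_1\in\Vb$ near the minimizer in $cl(\Vb)$ (your minimizing sequence $\taub_k$ plays the same role), uses $u_n$-consistency plus a ball $B(\taub_1,\lambda)\subset\Vb$ to force $\beta(K_n)\to 0$, and then applies the directional version of \eqref{s270} before optimizing over the direction. The directional extension of \eqref{s270} that you flag as the main obstacle is exactly the point the paper also leaves implicit (it reduces to the unit-matrix case ``for clarity'' and omits the rest), so your treatment matches the paper's level of detail.
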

Note that the requirement \eqref{s21} of differentiability of the function $g$ in $\mathbb{L}_2$ can be replaced by the weaker conditions (2.5)-(2.7) in \cite{er03} on the behavior of the function $g$ itself.

The following lower bound for local asymptotic Bahadur efficiency is deduced from Theorem~\ref{t2}.
Estimator $\hat\thetab_n$ is called consistent, if for any $\thetab_0\in\Theta$, for any $\varepsilon > 0$, there is
\begin{equation*}
\lim_{n\to \infty} \mathbf{P} (\,|\hat\thetab_n - \thetab_0| > \varepsilon) = 0.
\end{equation*}
\begin{theorem}\label{t3} 
Let the statistical experiment $\mathcal{E}= \{(S,\mathcal{B}), \mathbf{P}_{\thetab}, \thetab \in \Thetab\}$ have the finite Fisher information at the point $\thetab_0 \in \Thetab\subset \mathbb{R}^d$. Let estimator $\hat \thetab_n$ be consistent. Then, for any bounded open set $\Vb \subset \mathbb{R}^d$, we have
\begin{equation}\label{d27}
\liminf_{u\to 0}\lim_{n\to \infty}(nu^2)^{-1} \log \mathbf{P} (\,\hat\thetab_n - \thetab_0 \in u  \Vb) \ge - \frac{1}{2}\inf_{\taub \in \Vb} \taub^T I(\thetab_0)\taub.
\end{equation}
\end{theorem}
Proving a lower bound for Bahadur's local asymptotic efficiency by Bahadur's method requires introducing additional regularity conditions for the family of probability measures $\mathbf{P}_{\thetab}$, $\thetab \in \Thetab$ (see Theorem 9.3, Chapter 1 in \cite{ih}, as well as the remark on Bahadur's lower bound (1.6) in \cite{ha} and the paper \cite{fu} devoted to this issue).

\begin{proof}[Proof of Theorem~\ref{t1}] 
We consider problem of hypothesis testing $H_0: \theta = \theta_0$ versus alternatives $H_n: \theta = \theta_0+v_n$, $v_n = r u_n$, $r >1$. Define the tests $K_n = K_n(X_1,\ldots,X_n) = \mathbf{1}(|\hat\theta -\theta_0| > u_n)$.
Since the estimator $\hat\theta_n$ is $u_n$-consistent, we can implement \eqref{s270} and we get
\begin{equation}\label{s28}
\begin{split}
& \limsup_{n\to \infty}(nv_n^2I(\theta))^{-1/2}(|2 \log \mathbf{P}_{\theta_0}(|\hat\theta_n -\theta_0| >u_n) |^{1/2} \\
& + |2 \log \mathbf{P}_{\theta_0+v_n}(|\hat\theta_n -\theta_0| <u_n) |^{1/2}) \le 1.
\end{split}
\end{equation}
By \eqref{s28}, we get
\begin{equation}\label{s29}
\liminf\sup_{n\to \infty}(nr^2u_n^2I(\theta))^{-1/2}(|2 \log \mathbf{P}_{\theta_0}(|\hat\theta_n -\theta_0| >u_n )|^{1/2}  \le 1.
\end{equation}
Since the choice of $r>1$ is arbitrary, we get \eqref{s23}.
Inequality \eqref{s24} is proved similarly. It suffices to implement \eqref{s270} to the test $K_n = \mathbf{1}(\hat\theta_n -\theta_0 > u_n)$.
Since the estimator $\hat\theta_n$ is $u_n$-consistent, then there is $n_0$, such that, for $n> n_0$,
\begin{equation}
\alpha(K_n) \le \mathbf{P}(|\hat\theta_n - \theta_0| > u_n) < c< 1.
\end{equation}
and
\begin{equation}\label{x28}
\begin{split}
\beta(K_n) &= \mathbf{P}_{\theta_0+v_n}(\hat\theta_n -\theta_0 < u_n)\\
&= \mathbf{P}_{\theta_0+v_n}(\hat\theta_n -\theta_0- v_n < u_n-v_n)\\
&\le \mathbf{P}_{\theta_0+v_n}(|\hat\theta_n -\theta_0- v_n| > (r-1) u_n) < c <1.
\end{split}
\end{equation}
Therefore, arguing similarly to \eqref{s28} and \eqref{s29}, we get \eqref{s24}.
\end{proof}

\begin{proof}[Proof of Theorem~\ref{t2}] 
Proof in the case of $\thetab\in\Thetab\subset\mathbb{R}^d$ is akin to $\theta\in\Theta\subset\mathbb{R}^1$ and also is reduced to the problem of estimating a parameter at two points.
Denote $cl(\Vb)$ the closure of set $\Vb$. The reasoning will be given if $0 \notin cl(\Vb)$.
Let $\taub_0 \in cl(\Vb)$ and let
\begin{equation}\label{s25u}
e \doteq \taub_0^T I(\thetab_0) \taub_0 = \inf_{\taub \in \Vb} \taub^T I(\thetab)\taub.
\end{equation}
For clarity the further reasoning will be provided with unite matrix $I(\thetab_0)$ and $e=1$.
Then, for any $\delta > 0$, there is such $\taub_1 \in \Vb$ that $|\taub_1 - \taub_0| < \delta$. There are $\lambda > 0$ such that $B(\taub_1, \lambda) \subset \Vb$ where $B(\taub_1, \lambda)$ is the ball having the center $\taub_1$ and radius $\lambda$. There is $r_0 >1$ such that, for $1 < r < r_0$, $B(r\taub_1,(r-1)/2) \subset B(\taub_1, \lambda)$.
Consider problem of testing hypothesis $H_0: \thetab = \thetab_0$ versus alternative $H_n: \thetab = \thetab_n = \thetab_0 + \vb_n$, where $\vb_n = r u_n \taub_1$.
Define the test $K_n = \mathbf{1}\{\hat\thetab_n - \thetab_0 \in u_n \Vb\}$.
Then we have
\begin{equation*}
\alpha(K_n) \le \mathbf{P} (|\hat\thetab_n - \thetab_0| > u_n) < c < 1
\end{equation*}
and
\begin{equation*}
\begin{split}
\beta(K_n) &= \mathbf{P}_{\theta_n}(\hat\thetab_n - \thetab_0 \notin u_n \Vb) \le \mathbf{P}_{\theta_n}(\hat\thetab_n - \thetab_n \notin u_n \Vb- \vb_n) \\
&\le \mathbf{P}_{\theta_n}(\hat\thetab_n - \thetab_n \notin u_n\,B(r\taub_1,(r-1)/2)) < c < 1,
\end{split}
\end{equation*}
since the set $u_n \Vb - \vb_n$ contains the ball $B(0,\lambda u_n)$. Therefore we can implement \eqref{s270}.
Further reasoning essentially coincides with the reasoning for proving Theorem~\ref{t2} and is omitted.
\end{proof}

\begin{proof}[Proof of Theorem~\ref{t3}] 
We carry out the reasoning for $\theta_0 \in \Theta \subset \mathbb{R}^1$.
Let's take the points $\theta_0$ and $\theta_u=\theta_0 + r\,u \in \Theta$, $r >1$, $u >0$. Using the consistency of the estimate $\hat\theta_n$, we obtain
\begin{equation*}
\lim_{u \to 0}\lim_{n\to \infty} \mathbf{P} (\,\hat\theta_n - \theta_0 > u ) = 0
\end{equation*}
and
\begin{equation*}
\lim_{u \to 0}\lim_{n\to \infty} \mathbf{P}_{\theta_u} (\,\hat\theta_n - \theta_u < (r-1) u) = 0.
\end{equation*}
Therefore, for any sequence $u_k > 0$, $u_k \to 0$ as $k \to \infty$, there exists a sequence $n_{0k}$, $n_{0k} u_k^2 \to \infty$ as $k \to \infty$, such that for any sequence $n_k > n_{0k}$, we have
\begin{equation*}
\lim_{k\to \infty} \mathbf{P} (\,\hat\theta_{n_k} - \theta_0 > u_k) = 0
\end{equation*}
and
\begin{equation*}
\lim_{k\to \infty} \mathbf{P}_{\theta_{u_k}} (\,\hat\theta_{n_k} - \theta_{u_k} < (r-1) u_k) = 0,
\end{equation*}
and these are precisely the key inequalities used in the proof of Theorem~\ref{t3} (see the proof of Theorems~\ref{t1}, \ref{t2}, and \ref{tb1}). We omit further reasoning.
\end{proof}

\subsection{Bahadur efficiency in moderate deviation zone. Independent inhomogeneous random observations \label{o231}}
Let $X_{n1},\ldots,X_{nn}$ be independent random variables defined on probability space $(\mathcal{X},S, \mathbf{P}_{\thetab ni})$, $1 \le i \le n$, $\thetab \in \Thetab$. The set $\Thetab$ is open set in $\mathbb{R}^d$.
Suppose probability measures $\mathbf{P}_{\theta ni}$, $1 \le i \le n$, are absolutely continuous with respect to some measure $\mu$ and have densities
\begin{equation*}
f_{ni}(x,\thetab)= \frac{d\,\mathbf{P}_{\thetab ni}}{d\mu}.
\end{equation*}
For any $\thetab, \thetab + \taub \in \Thetab$ define functions $g_{ni}(x,\thetab,\thetab+\taub) =f_{ni}^{1/2}(x,\thetab+ \taub)f_{ni}^{-1/2}(x,\thetab) - 1$ for all $x$ belonging the support $\mathbf{P}^a_{\thetab + \taub, \thetab,ni}$ and equal to zero otherwise.
We say that statistical experiments $\Xi_{ni}= \{(S,\mathcal{B}), \mathbf{P}_{\thetab ni}, \thetab \in \Thetab\}$, $1 \le i \le n$, have finite Fisher information at a point $\thetab_0 \in \Thetab$, if there are functions $\phib_{ni}\, :\, S \to \mathbb{R}^d$ such that we have
\begin{equation}\label{a9}
\int_{S} (g_{ni}(x,\thetab,\thetab+\taub) - \taub^T \phib_{ni}(x,\thetab))^2\, d\mathbf{P}_{\thetab ni} = o(|\taub|^2), \quad
\mathbf{P}^s_{\thetab + \taub, \thetab,ni}(S) = o(|\taub|^2)
\end{equation}
as $\taub \to 0$ and Fisher information matrix
\begin{equation*}
I_{ni}(\theta) = 4 \int_{S} \phib_{ni}(x,\theta) \phib_{ni}^T(x,\thetab)\, d\mathbf{P}_{\thetab ni}
\end{equation*}
is positively defined.
Denote
\begin{equation*}
\Upsilon_n(\theta)= \sum_{i=1}^n I_{ni}(\thetab).
\end{equation*}
Let $\theta_0 \in \Theta \subset \mathbb{R}^1$. We consider the problem of testing hypothesis $H_0: \theta = \theta_0 $ versus alternatives $H_n : \theta = \theta_0 + u_n$.
\begin{theorem} \label{tu1}  
Let statistical experiments $\Xi_{ni}$, $1 \le i \le n$, have finite Fisher information. Let $u_n \to 0$ and $u_n^2 \,\Upsilon_n(\theta_0) \to \infty$ as $n \to \infty$. Assume
\begin{equation}\label{a10}
\sup_{|u| \le u_n}\sum_{i=1}^n \mathbf{E}\,[ (\phi_{ni}(x,\theta_0+u) - \phi_{ni}(x,\theta_0))^2\,] = o(\Upsilon_n(\theta_0))
\end{equation}
as $n \to \infty$.
Assume that, for all $\varepsilon > 0$, condition of Lindeberg type is satisfied
\begin{equation*}
\lim_{n\to\infty} \Upsilon_n(\thetab_0)^{-1}  \sum_{i=1}^n \mathbf{E}\,\Bigl[ \phi_{ni}^2(x,\theta_0)
\mathbf{1}\Bigl(|\phi_{ni}(x,\theta_0)| > \varepsilon u_n^{-1}\Bigr)\Bigr] = 0.
\end{equation*}
Then, for any test $K_n$ such that $\alpha(K_n) < c < 1$ and $\beta(K_n) < c < 1$, we have
\begin{equation}\label{a12}
\limsup_{n\to \infty}(u_n^2\Upsilon_n(\theta_0))^{-1/2}(|2 \log \alpha(K_n)|^{1/2} + |2 \log \beta(K_n)|^{1/2}) \le 1.
\end{equation}
\end{theorem}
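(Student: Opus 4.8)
The plan is to reduce the inhomogeneous testing problem to the corresponding Gaussian shift experiment in the moderate deviation zone and then invoke the Neyman--Pearson lemma, exactly as for the i.i.d.\ bound \eqref{s270}. Throughout, write $\sigma_n^2 = u_n^2\,\Upsilon_n(\theta_0)$, which tends to infinity by hypothesis, and denote by $L_n$ the log-likelihood ratio $\sum_{i=1}^n \log(f_{ni}(X_{ni},\theta_0+u_n)/f_{ni}(X_{ni},\theta_0)) = 2\sum_{i=1}^n\log(1+g_{ni})$, where $g_{ni}=g_{ni}(X_{ni},\theta_0,\theta_0+u_n)$. The first step is to establish a moderate-deviation version of local asymptotic normality: under $\mathbf{P}_{\theta_0}$,
\begin{equation*}
L_n = u_n\,\Delta_n - \tfrac12\,u_n^2\,\Upsilon_n(\theta_0) + R_n, \qquad \Delta_n = 2\sum_{i=1}^n\bigl(\phi_{ni}(X_{ni},\theta_0)-\mathbf{E}\,\phi_{ni}(X_{ni},\theta_0)\bigr),
\end{equation*}
with the remainder $R_n$ negligible on the logarithmic scale $\sigma_n^2$. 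I would obtain this by expanding $2\log(1+g_{ni}) = 2g_{ni} - g_{ni}^2 + O(g_{ni}^3)$, replacing $g_{ni}$ by its linearization $u_n\phi_{ni}$ through the $\mathbb{L}_2$-differentiability in \eqref{a9}, and invoking the Hellinger-type relation between $\mathbf{E}\,g_{ni}$ and $\mathbf{E}\,g_{ni}^2$ (up to the singular mass $\mathbf{P}^s_{\theta_0+u_n,\theta_0,ni}(S)$) to recover the deterministic drift $-\tfrac12 u_n^2\Upsilon_n(\theta_0)$; the singular-part bound in \eqref{a9} makes $\sum_i\mathbf{P}^s_{\theta_0+u_n,\theta_0,ni}(S)=o(\sigma_n^2)$.

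Second, I would prove the Gaussian logarithmic asymptotics for $\Delta_n$. Since $u_n\Delta_n$ is a sum of independent centred summands $2u_n(\phi_{ni}-\mathbf{E}\,\phi_{ni})$ with total variance $\asymp\sigma_n^2$, and the relevant deviations of $L_n$ are of order $\sigma_n^2$, the pertinent normalized deviations of $\sigma_n^{-1}u_n\Delta_n$ are of order $\sigma_n\to\infty$; the Lindeberg-type condition is assumed precisely at the truncation level $\varepsilon u_n^{-1}$ that matches this scale, since $\sum_i\mathbf{E}[(u_n\phi_{ni})^2\mathbf{1}(|u_n\phi_{ni}|>\varepsilon)]=o(\sigma_n^2)$ is exactly the stated hypothesis after division by $\sigma_n^2$. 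Consequently $\sigma_n^{-1}u_n\Delta_n$ obeys a moderate deviation principle with the standard Gaussian rate, so that $\log\mathbf{P}(u_n\Delta_n>y\sigma_n)=-(1+o(1))y^2/2$ uniformly for $y$ in the relevant range. The continuity condition \eqref{a10} then transports these asymptotics to $\mathbf{P}_{\theta_0+u_n}$: it guarantees that $\Upsilon_n$ and the linearization are stable under the shift $\theta_0\mapsto\theta_0+u_n$, whence under $\mathbf{P}_{\theta_0+u_n}$ one has $L_n = u_n\Delta_n + \tfrac12 u_n^2\Upsilon_n(\theta_0)+R_n'$ with the same tail behaviour recentred by $+\tfrac12\sigma_n^2$.

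Third, with both marginal asymptotics in hand the problem is asymptotically the test of $N(-\sigma_n^2/2,\sigma_n^2)$ against $N(\sigma_n^2/2,\sigma_n^2)$ based on $L_n$; standardizing by $\sigma_n$ turns this into testing $N(-\sigma_n/2,1)$ against $N(\sigma_n/2,1)$, whose mean separation is $\sigma_n\to\infty$. For any test $K_n$ the Neyman--Pearson lemma bounds its type II error below by that of the likelihood-ratio test with the same type I error, so that $\sqrt{2|\log\alpha(K_n)|}+\sqrt{2|\log\beta(K_n)|}$ cannot exceed the value attained along the likelihood-ratio boundary. The Gaussian tail estimate $-\log(1-\Phi(z))=(1+o(1))z^2/2$ shows that for the likelihood-ratio test with standardized threshold $s\in(-\sigma_n/2,\sigma_n/2)$ one has $\sqrt{2|\log\alpha|}=(1+o(1))(s+\sigma_n/2)$ and $\sqrt{2|\log\beta|}=(1+o(1))(\sigma_n/2-s)$, whose sum is $(1+o(1))\sigma_n$ throughout the range consistent with $\alpha(K_n)<c<1$ and $\beta(K_n)<c<1$; dividing by $\sigma_n=(u_n^2\Upsilon_n(\theta_0))^{1/2}$ yields \eqref{a12}.

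The main obstacle is the first two steps: controlling the remainder $R_n$ and establishing the moderate deviation principle precisely enough that all approximation errors remain $o(\sigma_n^2)$ on the exponential scale. Because $\alpha(K_n)$ and $\beta(K_n)$ are governed by deviations of $L_n$ of order $\sigma_n^2\to\infty$, an error that is merely small in probability may still corrupt the logarithmic rate, so the $\mathbb{L}_2$-linearization error from \eqref{a9}, the cubic terms in the expansion of $\log(1+g_{ni})$, and the fluctuation of $\sum_i g_{ni}^2$ about $\tfrac14 u_n^2\Upsilon_n(\theta_0)$ must each be shown exponentially negligible at this scale. This is exactly where the Lindeberg condition and the continuity condition \eqref{a10} enter, and it is the one genuinely delicate part of the argument; the reduction to Neyman--Pearson is then routine and parallels the proof of the i.i.d.\ bound \eqref{s270}.
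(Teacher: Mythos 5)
Your overall strategy---a moderate-deviation LAN expansion of $\log L_n$ followed by the Neyman--Pearson reduction to the Gaussian shift problem---is the same as the paper's, and your accounting of the scales (the drift $\pm\tfrac12 u_n^2\Upsilon_n(\theta_0)$, the role of the truncation level $\varepsilon u_n^{-1}$ in the Lindeberg condition, the Gaussian tail calculus giving $\sqrt{2|\log\alpha|}+\sqrt{2|\log\beta|}\le(1+o(1))\sigma_n$) is accurate. However, there is a genuine gap at exactly the point you flag as ``the one genuinely delicate part'': you state that the linearization error, the cubic terms in $\log(1+g_{ni})$, and the fluctuation of $\sum_i g_{ni}^2$ ``must each be shown exponentially negligible at this scale,'' but you give no mechanism for doing so. This cannot be waved away: the only quantitative control available from \eqref{a9} and \eqref{a10} is on second moments, and a second-moment (Chebyshev) bound on the remainder $R_n$ yields smallness in probability of polynomial order, which is useless against events whose probability is $\exp\{-O(u_n^2\Upsilon_n(\theta_0))\}$. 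As written, the argument does not close.

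The paper's device for closing it is a truncation-and-conditioning step that your sketch lacks. One introduces the events $A_{ni}=\{|u_n\phi_{ni}(X_{ni})|>\varepsilon\}$ and $D_{ni}=\{|g_{ni}(X_{ni})|>\varepsilon\}$ as in \eqref{qq26}, shows via Chebyshev, the Lindeberg condition, and the $\mathbb{L}_2$-differentiability \eqref{a9} that $\sum_i(\mathbf{P}(A_{ni})+\mathbf{P}(D_{ni}))=o(u_n^2\Upsilon_n(\theta_0))$, and hence that the good event $U_n=(\cap_i\bar A_{ni})\cap(\cap_i\bar D_{ni})$ satisfies $\mathbf{P}(U_n)\ge\exp\{-o(u_n^2\Upsilon_n(\theta_0))\}$ as in \eqref{qq30}, so conditioning on $U_n$ is free at the logarithmic scale of interest. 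On $U_n$ every summand of $\log L_n$ is uniformly small, the Taylor expansion of $\log(1+g_{ni})$ is controlled pathwise, and exponential-moment (Cram\'er-type) bounds become available; the precise conditional asymptotics \eqref{g12}--\eqref{g13} are then stated as Lemma~\ref{lem1} (with the computation deferred to an external reference). If you want your proof to stand on its own, you need to either reproduce this conditioning argument or supply an alternative route to exponential negligibility of $R_n$; the $\mathbb{L}_2$ hypotheses alone, used unconditionally, do not suffice.
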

The lower bounds for local Bahadur, local Hodges-Lehman and local Chernoff asymptotic efficiencies \cite{er03} follow from the following Corollary~\ref{ctu1} of Theorem~\ref{tu1}.
Let $\theta_0 \in \Theta \subset \mathbb{R}^1$. We consider the problem of testing hypothesis $H_0: \theta = \theta_0 $ versus alternatives $H_n : \theta = \theta_0 + u$.
\begin{corollary} \label{ctu1}  
Let statistical experiments $\Xi_{ni}$, $1 \le i \le n$, have finite Fisher information. Let $\Upsilon_n(\theta_0) \to \infty$ as $n \to \infty$. Assume
\begin{equation}\label{cau10}
\lim_{u \to 0}\,\limsup_{n\to\infty}\,\Upsilon_n^{-1}(\theta_0)\,\sup_{|v| \le u}\,\sum_{i=1}^n \mathbf{E}\,[ (\phi_{ni}(x,\theta_0+v) - \phi_{ni}(x,\theta_0))^2\,] = 0.
\end{equation}
Assume that, for all $\varepsilon > 0$, condition of Lindeberg type is satisfied
\begin{equation*}
\lim_{u \to 0}\,\limsup_{n\to\infty}\, \Upsilon_n(\thetab_0)^{-1}\,  \sum_{i=1}^n\, \mathbf{E}\,\Bigl[ \phi_{ni}^2(x,\theta_0)
\mathbf{1}\Bigl(|\phi_{ni}(x,\theta_0)| > \varepsilon u^{-1}\Bigr)\Bigr] = 0.
\end{equation*}
Then, for any test $K_n$ such that $\alpha(K_n) < c < 1$ and $\beta(K_n) < c < 1$, we have
\begin{equation}\label{cau12}
\limsup_{u\to 0}\limsup_{n\to \infty}\,(u^2\Upsilon_n(\theta_0))^{-1/2}\,(|2 \log \alpha(K_n)|^{1/2} + |2 \log \beta(K_n)|^{1/2}) \le 1.
\end{equation}
\end{corollary}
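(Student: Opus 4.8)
The plan is to deduce Corollary~\ref{ctu1} from Theorem~\ref{tu1} by a diagonal argument that turns the iterated limit $\limsup_{u\to0}\limsup_{n\to\infty}$ into a statement about a single, suitably chosen sequence $u_n\to 0$. Write $A_n(u)=(u^2\Upsilon_n(\theta_0))^{-1/2}(|2\log\alpha(K_n)|^{1/2}+|2\log\beta(K_n)|^{1/2})$, where $\beta(K_n)$ is the type II error against the alternative $\theta_0+u$. It is convenient first to recast the two hypotheses in a monotone form. Setting $h_n(u)=\Upsilon_n^{-1}(\theta_0)\sup_{|v|\le u}\sum_{i=1}^n\mathbf{E}[(\phi_{ni}(x,\theta_0+v)-\phi_{ni}(x,\theta_0))^2]$, condition \eqref{cau10} reads $\lim_{u\to0}\limsup_{n\to\infty}h_n(u)=0$, and $h_n$ is nondecreasing in $u$. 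Likewise, since the truncated second moment $\tilde{L}_n(t)=\Upsilon_n^{-1}(\theta_0)\sum_{i=1}^n\mathbf{E}[\phi_{ni}^2(x,\theta_0)\mathbf{1}(|\phi_{ni}(x,\theta_0)|>t)]$ depends on $\varepsilon$ and $u$ only through $t=\varepsilon u^{-1}$ and is nonincreasing in $t$, the Lindeberg hypothesis of the corollary is equivalent to $\lim_{t\to\infty}\limsup_{n\to\infty}\tilde{L}_n(t)=0$.

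Arguing by contradiction, suppose the left-hand side of \eqref{cau12} exceeds $1$, so there are $\eta>0$ and a sequence $u_k\downarrow 0$ with $\limsup_{n\to\infty}A_n(u_k)\ge 1+\eta$ for every $k$. Using the two monotone limits above together with $\Upsilon_n(\theta_0)\to\infty$, I would build $u_n\to 0$ block by block: passing to a subsequence $u_{k_m}\downarrow 0$ along which $\limsup_n h_n(u_{k_m})$ and $\limsup_n\tilde{L}_n(\varepsilon u_{k_m}^{-1})$ are below $1/m$, and choosing an increasing sequence $N_m$ so that for $n\ge N_m$ one has simultaneously $h_n(u_{k_m})<2/m$, $\tilde{L}_n(\varepsilon u_{k_m}^{-1})<2/m$, and $u_{k_m}^2\Upsilon_n(\theta_0)>m$. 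Setting $u_n=u_{k_m}$ for $N_m\le n<N_{m+1}$ then yields $u_n\to0$ with $u_n^2\Upsilon_n(\theta_0)\to\infty$, $h_n(u_n)\to0$ (so \eqref{a10} holds), and $\tilde{L}_n(\varepsilon u_n^{-1})\to0$ for each fixed $\varepsilon>0$ (so the Lindeberg hypothesis of Theorem~\ref{tu1} holds). Within each block I would also select, using $\limsup_n A_n(u_{k_m})\ge1+\eta$, an index $n_m\ge N_m$ (increasing in $m$) with $A_{n_m}(u_{k_m})\ge 1+\eta/2$.

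Along this sequence the alternatives are $\theta_0+u_n$, and the error bounds $\alpha(K_n)<c<1$, $\beta(K_n)<c<1$ assumed in the corollary supply exactly the admissibility required by Theorem~\ref{tu1}. Hence \eqref{a12} applies and gives $\limsup_{n\to\infty}A_n(u_n)\le1$; but $A_{n_m}(u_{n_m})=A_{n_m}(u_{k_m})\ge1+\eta/2$ along $n_m\to\infty$, which contradicts this bound. Therefore the iterated limit in \eqref{cau12} is at most $1$, as claimed.

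I expect the block construction to be the only delicate point, because it must reconcile two competing demands on the rate of $u_n\to0$: the moderate-deviation normalization forces $u_n^2\Upsilon_n(\theta_0)\to\infty$, which wants $u_n$ to decay slowly, while the regularity remainder $h_n(u_n)$ and the truncation $\tilde{L}_n(\varepsilon u_n^{-1})$ must vanish, which wants $u_n$ small and the threshold $\varepsilon u_n^{-1}$ large. The reduction of both hypotheses to the monotone iterated limits $\lim_{u\to0}\limsup_n h_n(u)=0$ and $\lim_{t\to\infty}\limsup_n\tilde{L}_n(t)=0$ is what makes these demands compatible, since it lets me first fix $u_{k_m}$, thereby controlling the $\limsup$ in $n$, and only afterwards push $N_m$ far enough to the right. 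The one remaining subtlety is obtaining $\tilde{L}_n(\varepsilon u_n^{-1})\to0$ for every $\varepsilon>0$ from a single sequence $u_n$; this follows because $\varepsilon u_n^{-1}\to\infty$ for each fixed $\varepsilon$ and $\tilde{L}_n$ is monotone in its argument.
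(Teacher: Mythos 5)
Your proposal is correct, and it follows the route the paper intends: the corollary is stated as a direct consequence of Theorem~\ref{tu1}, with the reduction of the iterated limit to a single diagonal sequence $u_n\to 0$ left implicit (the same device is spelled out in the paper's proof of Theorem~\ref{tb1}). Your block construction and the monotonicity observations that make the hypotheses of Theorem~\ref{tu1} hold along the diagonal sequence are exactly the details the paper omits.
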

Introduce the following assumption.

\noindent\textbf{D.} There are such constants $c, C$ and natural number $n_0$, that, for each $\thetab \in \Thetab\subset \mathbb{R}^d$ and $ n > n_0$, we have
\begin{equation*}
0 < c < \frac{\eb_1^T \Upsilon_n(\thetab) \eb_1}{\eb_2^T \Upsilon_n(\thetab) \eb_2} < C
\end{equation*}
for all unit vectors $\eb_1, \eb_2 \in \mathbb{R}^d$.

\begin{theorem} \label{tu2}   
Let $\Thetab\subset \mathbb{R}^d$ and let \textbf{D} hold. Let statistical experiments $\Xi_{ni}$, $1 \le i \le n$, have finite Fisher information. Let $\eb$ be unit vector in $\mathbb{R}^d$.
Let $u_n > 0$, $u_n \to 0$, $u_n^2\, \eb^T \Upsilon_n(\thetab_0) \eb \to \infty$ as $n \to \infty$.
Assume, there is $c>2$ such that we have
\begin{equation}\label{q21u}
\sup_{|u| < c\,u_n}\sum_{i=1}^n \mathbf{E}\,[ (\phib_{ni}^T(X_{ni},\thetab_0+u) \eb - \phib_{ni}^T(X_{ni},\thetab_0) \eb)^2\, ] = o( \eb^T\Upsilon_n(\theta_0) \eb)
\end{equation}
as $n \to \infty$.
Assume, for all $\varepsilon > 0$, Lindeberg type condition holds
\begin{equation}\label{qq21u}
\begin{split}
\lim_{n\to\infty} & (\eb^T\Upsilon_n(\theta_0) \eb)^{-1} \sum_{i=1}^n \mathbf{E}\,\Bigl[ (\phib_{ni}^T(X_{ni},\thetab_0) \eb)^2\\
& \times \mathbf{1}\Bigl(|\phib_{ni}^T(X_{ni},\thetab_0) \eb| > \varepsilon u_n^{-1}\Bigr)\Bigr] = 0.
\end{split}
\end{equation}
Then for any estimator $\hat\thetab_n$, for points $\thetab_0,\thetab_n = \theta_0 +2 u_n\eb \in \Theta$, we have
\begin{equation}\label{bu1}
\liminf_{n\to \infty}\sup_{\thetab=\thetab_0,\thetab_n}(u_n^2\eb^T\Upsilon_n(\theta_0) \eb/2)^{-1} \log \mathbf{P}_{\thetab} (\,|\hat\thetab_n - \thetab| > u_n) \ge - 1.
\end{equation}
Assume additionally that $\hat\thetab$ is $u_n$-consistent and \eqref{q21u}, \eqref{qq21u} hold for all unit vectors $\eb \in \mathbb{R}^d$.
Then, for any open set $\Vb\subset \mathbb{R}^d$, we have
\begin{equation}\label{qq25}
\liminf_{n\to \infty}(u_n^2\,\taub_n^T\Upsilon_n(\thetab) \taub_n/2)^{-1} \log \mathbf{P}_{\thetab} (\,\hat\thetab_n - \thetab \in u_n \Vb) \ge - 1,
\end{equation}
where vector $\taub_n \in cl(\Vb)$ satisfies the equation
\begin{equation}
\taub_n^T \Upsilon_n(\thetab)\taub_n = \inf_{\taub \in \Vb} \taub^T \Upsilon_n(\thetab)\taub.
\end{equation}
\end{theorem}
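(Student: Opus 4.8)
The plan is to derive both assertions from the inhomogeneous testing bound of Theorem~\ref{tu1}, applied to the one-dimensional submodel obtained by restricting the parameter to a line through $\thetab_0$. Fixing a unit vector $\eb$, the curve $t\mapsto\mathbf{P}_{\thetab_0+t\eb,ni}$ is a scalar experiment whose score is $\phib_{ni}^T(\cdot,\thetab_0)\eb$ and whose summed Fisher information is $\eb^T\Upsilon_n(\thetab_0)\eb$; the directional hypotheses \eqref{q21u} and \eqref{qq21u} are precisely the smoothness and Lindeberg conditions of Theorem~\ref{tu1} for this submodel, the factor $c>2$ in \eqref{q21u} being present so that the smoothness bound covers the full separation $2u_n$ between the competing points. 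Hence Theorem~\ref{tu1} gives, for any test $K_n$ with $\alpha(K_n),\beta(K_n)<c<1$ separating $\thetab_0$ from a point displaced by $s_n\eb$,
\begin{equation*}
\limsup_{n\to\infty}(s_n^2\,\eb^T\Upsilon_n(\thetab_0)\eb)^{-1/2}\bigl(|2\log\alpha(K_n)|^{1/2}+|2\log\beta(K_n)|^{1/2}\bigr)\le 1 .
\end{equation*}

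For \eqref{bu1} I would repeat the argument of Theorem~\ref{t0} verbatim in the projected coordinate $(\hat\thetab_n-\thetab_0)^T\eb$. With $\thetab_n=\thetab_0+2u_n\eb$ and $K_n=\mathbf{1}\{(\hat\thetab_n-\thetab_0)^T\eb>u_n\}$, the inequality $|(\hat\thetab_n-\thetab_0)^T\eb|\le|\hat\thetab_n-\thetab_0|$ gives $\alpha(K_n)\le\mathbf{P}_{\thetab_0}(|\hat\thetab_n-\thetab_0|>u_n)$, and writing $(\hat\thetab_n-\thetab_0)^T\eb=(\hat\thetab_n-\thetab_n)^T\eb+2u_n$ gives $\beta(K_n)\le\mathbf{P}_{\thetab_n}(|\hat\thetab_n-\thetab_n|>u_n)$. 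Inserting these into the displayed bound with $s_n=2u_n$ and bounding the smaller of the two square-root terms by half of their sum yields $\log\sup_{\thetab=\thetab_0,\thetab_n}\mathbf{P}_{\thetab}(|\hat\thetab_n-\thetab|>u_n)\ge-\tfrac12u_n^2\eb^T\Upsilon_n(\thetab_0)\eb\,(1+o(1))$, which is \eqref{bu1}.

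For \eqref{qq25} I would mimic the proof of Theorem~\ref{t2}. Let $\taub_n\in cl(\Vb)$ attain $\inf_{\taub\in\Vb}\taub^T\Upsilon_n(\thetab_0)\taub$ (the infimum is positive and attained on a compact set because $0\notin cl(\Vb)$ and, by Assumption~\textbf{D}, the quadratic form is coercive uniformly in $n$), set $\eb_n=\taub_n/|\taub_n|$, choose $\taub_{1,n}\in\Vb$ with $|\taub_{1,n}-\taub_n|<\delta$, a ball $B(\taub_{1,n},\lambda)\subset\Vb$, and a threshold $r_0>1$ so that $B(r\taub_{1,n},\tfrac{r-1}{2})\subset B(\taub_{1,n},\lambda)$ for $1<r<r_0$, and take the alternative $\thetab_n=\thetab_0+ru_n\taub_{1,n}$. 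For $K_n=\mathbf{1}\{\hat\thetab_n-\thetab_0\in u_n\Vb\}$, the condition $0\notin cl(\Vb)$ with $u_n$-consistency gives $\alpha(K_n)<c<1$, while the containment $u_n\Vb-ru_n\taub_{1,n}\supset B(0,\tfrac{r-1}{2}u_n)$ (which follows from $B(r\taub_{1,n},\tfrac{r-1}{2})\subset\Vb$) together with $u_n$-consistency gives $\beta(K_n)\le\mathbf{P}_{\thetab_n}(|\hat\thetab_n-\thetab_n|>\tfrac{r-1}{2}u_n)<c<1$. Applying the directional bound along $\eb_n$ with $s_n=ru_n|\taub_{1,n}|$, discarding the bounded $\beta$-term, and using $(ru_n|\taub_{1,n}|)^2\eb_n^T\Upsilon_n(\thetab_0)\eb_n=r^2u_n^2\taub_{1,n}^T\Upsilon_n(\thetab_0)\taub_{1,n}$ gives $\log\mathbf{P}(\hat\thetab_n-\thetab_0\in u_n\Vb)\ge-\tfrac12 r^2u_n^2\taub_{1,n}^T\Upsilon_n(\thetab_0)\taub_{1,n}\,(1+o(1))$; letting $r\downarrow1$ and $\delta\downarrow0$ replaces $\taub_{1,n}$ by $\taub_n$ and produces \eqref{qq25}.

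The main obstacle is uniformity in $n$. Since $\Upsilon_n(\thetab_0)$ depends on $n$, so do the minimizer $\taub_n$, the direction $\eb_n$, and the radii $\lambda,\rho$ governing the ball containments; to pass to the limit I must ensure these geometric constants, and the $o(1)$ remainders delivered by Theorem~\ref{tu1}, are controlled uniformly over $n$ and over the directions $\eb_n$. This is exactly what Assumption~\textbf{D} supplies: the uniformly bounded condition number of $\Upsilon_n(\thetab_0)$ makes all forms $\eb^T\Upsilon_n(\thetab_0)\eb$ comparable to a single scalar factor, so the minimizer stays in a fixed compact set, the separation $ru_n|\taub_{1,n}|$ remains within the range $|u|<cu_n$ on which \eqref{q21u} is assumed, and the hypotheses of Theorem~\ref{tu1} hold with remainders uniform in $\eb_n$. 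Verifying this uniform comparability, rather than the routine testing-to-estimation reduction, is where the real work lies.
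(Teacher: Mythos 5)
Your proposal is correct and follows exactly the route the paper itself indicates (the paper omits the proof, saying only that it is based on Theorem~\ref{tu1} and is akin to the proofs of Theorems~\ref{t0}--\ref{t2}): you reduce to the one-dimensional submodel along $\eb$, apply \eqref{a12} with the test $\mathbf{1}\{(\hat\thetab_n-\thetab_0)^T\eb>u_n\}$ for \eqref{bu1}, and repeat the ball-containment argument of Theorem~\ref{t2} for \eqref{qq25}, correctly identifying Assumption~\textbf{D} as what controls the $n$-dependence of the minimizer $\taub_n$. Apart from trivial slips (the unit vector in the second part should be $\taub_{1,n}/|\taub_{1,n}|$ rather than $\taub_n/|\taub_n|$, and an undefined radius $\rho$), your reconstruction supplies more detail than the paper does.
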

\begin{remark} 
Note that \eqref{a10} and \eqref{q21u} can be replaced respectively with
\begin{equation*}
\sup_{|u| < c u_n} \,\sum_{i=1}^n\, \mathbf{E}\,( g_{ni}(X_{ni},u) - u \phi_{ni}(X_{ni},\theta_0))^2 = o(|u_n|^2\, \Upsilon_n(\theta_0))
\end{equation*}
and
\begin{equation*}
\sup_{|u| < c u_n} \,\sum_{i=1}^n\, \mathbf{E}\,( g_{ni}(X_{ni},\thetab_0,\thetab_0+u \eb) - u \,\eb^T \phib_{ni}(X_{ni},\thetab_0))^2 = o(|u_n|^2\, \eb^T\Upsilon_n(\thetab_0)\,\eb)
\end{equation*}
as $n \to \infty$.
\end{remark}
From Theorem~\ref{tu2} we get the following Corollary for local Bahadur asymptotic efficiency.
\begin{corollary} \label{ctu2}   
Let $\Thetab\subset \mathbb{R}^d$ and let \textbf{D} hold. Let statistical experiments $\Xi_{ni}$, $1 \le i \le n$, have finite Fisher information. Let $\eb$ be unit vector in $\mathbb{R}^d$.
Let $u_n > 0$, $u_n \to 0$, $u_n^2\, \eb^T \Upsilon_n(\thetab_0) \eb \to \infty$ as $n \to \infty$.
Assume, there is $c>2$ such that we have
\begin{equation}\label{cq21u}
\limsup_{u\to 0}\limsup_{n\to\infty}\sup_{|v| < c\,u}( \eb^T\Upsilon_n(\theta_0) \eb)^{-1}\sum_{i=1}^n \mathbf{E}\,[ (\phib_{ni}^T(X_{ni},\thetab_0+v) \eb - \phib_{ni}^T(X_{ni},\thetab_0) \eb)^2\, ] = 0
\end{equation}
as $n \to \infty$.
Assume, for all $\varepsilon > 0$, Lindeberg type condition holds
\begin{equation}\label{cqq21u}
\begin{split}
\limsup_{u\to 0}\limsup_{n\to\infty} & (\eb^T\Upsilon_n(\theta_0) \eb)^{-1} \sum_{i=1}^n \mathbf{E}\,\Bigl[ (\phib_{ni}^T(X_{ni},\thetab_0) \eb)^2\\
& \times \mathbf{1}\Bigl(|\phib_{ni}^T(X_{ni},\thetab_0) \eb| > \varepsilon u^{-1}\Bigr)\Bigr] = 0.
\end{split}
\end{equation}
Then for any estimator $\hat\thetab_n$, for points $\thetab_0,\thetab_u = \theta_0 +2 u\eb \in \Theta$, we have
\begin{equation}\label{cbu1}
\liminf_{u\to 0}\liminf_{n\to \infty}\sup_{\thetab=\thetab_0,\thetab_u}(u^2\eb^T\Upsilon_n(\theta_0) \eb/2)^{-1} \log \mathbf{P}_{\thetab} (\,|\hat\thetab_n - \thetab| > u) \ge - 1.
\end{equation}
Assume additionally that $\hat\thetab$ is consistent and \eqref{cq21u}, \eqref{cqq21u} hold for all unit vectors $\eb \in \mathbb{R}^d$.
Then, for any open set $\Vb\subset \mathbb{R}^d$, we have
\begin{equation}\label{qq25b}
\liminf_{u\to 0}\liminf_{n\to \infty}(u^2\,\taub_n^T\Upsilon_n(\thetab) \taub_n/2)^{-1} \log \mathbf{P}_{\thetab} (\,\hat\thetab_n - \thetab \in u \Vb) \ge - 1,
\end{equation}
where vector $\taub_n \in cl(\Vb)$ satisfies the equation
\begin{equation}
\taub_n^T \Upsilon_n(\thetab)\taub_n = \inf_{\taub \in \Vb} \taub^T \Upsilon_n(\thetab)\taub.
\end{equation}
\end{corollary}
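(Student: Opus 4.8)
The plan is to obtain Corollary~\ref{ctu2} from Theorem~\ref{tu2} in exactly the same way that the local statement Theorem~\ref{t3} is obtained from the fixed-sequence statements Theorems~\ref{t1} and \ref{t2}. Indeed, the two displays \eqref{cbu1} and \eqref{qq25b} are the iterated-limit ($\liminf_{u\to0}\liminf_{n\to\infty}$) analogues of \eqref{bu1} and \eqref{qq25}, the hypotheses \eqref{cq21u}, \eqref{cqq21u} are the iterated-limit forms of \eqref{q21u}, \eqref{qq21u}, and ordinary consistency is the iterated-limit form of $u_n$-consistency. The role played by the single-sequence testing bound \eqref{a12} inside the proof of Theorem~\ref{tu2} will now be played by its iterated-limit counterpart \eqref{cau12} of Corollary~\ref{ctu1}, so no new probabilistic input is needed beyond what is already in the excerpt.

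For \eqref{cbu1} I would argue as in the proof of Theorem~\ref{t0}. Fix $u>0$ and set $\thetab_u=\thetab_0+2u\eb$; for the test $K_n=\mathbf{1}\{|\hat\thetab_n-\thetab_0|>u\}$ one has $\alpha(K_n)\le\mathbf{P}_{\thetab_0}(|\hat\thetab_n-\thetab_0|>u)$ and $\beta(K_n)\le\mathbf{P}_{\thetab_u}(|\hat\thetab_n-\thetab_u|>u)$, so the supremum over the two points $\thetab_0,\thetab_u$ dominates each of $\alpha,\beta$. Since $\eb^T\Upsilon_n(\thetab_0)\eb\to\infty$, the growth $u^2\,\eb^T\Upsilon_n(\thetab_0)\eb\to\infty$ holds for every fixed $u>0$, so feeding these two probabilities into \eqref{cau12} and taking $\limsup_{n}$ and then $\limsup_{u\to0}$ yields \eqref{cbu1}; this step needs no consistency. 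For \eqref{qq25b} I would instead follow the proofs of Theorems~\ref{t2} and \ref{t3}: take $K_n=\mathbf{1}\{\hat\thetab_n-\thetab_0\in u\Vb\}$ with alternative $\thetab_u=\thetab_0+\vb$, $\vb=ru\taub_1$, using that $u\Vb-\vb$ contains a ball $B(0,\lambda u)$ so that $\beta(K_n)\le\mathbf{P}_{\thetab_u}(|\hat\thetab_n-\thetab_u|>\lambda u)$, and then invoke consistency to bound both error probabilities below $c<1$ for $n$ large after $u\to0$.

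The passage from consistency to the iterated-limit conclusion is the one genuinely nontrivial point, and I would handle it by the diagonal construction already used in the proof of Theorem~\ref{t3}: for any sequence $u_k\to0$, consistency furnishes indices $n_{0k}$ with $n_{0k}\,u_k^2\,\eb^T\Upsilon_{n_{0k}}(\thetab_0)\eb\to\infty$ such that along every $n_k>n_{0k}$ the error probabilities stay below $c<1$, while \eqref{cq21u}, \eqref{cqq21u} place the regularity quantities within an arbitrarily small multiple of $\eb^T\Upsilon_{n_k}(\thetab_0)\eb$; the pair $(u_k,n_k)$ then lies in the moderate-deviation zone and \eqref{cau12} applies. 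The main obstacle is making this extraction simultaneous over all directions $\eb$ entering the infimum that defines $\taub_n$: it is precisely Assumption~\textbf{D} that renders the directional normalisations $\eb^T\Upsilon_n(\thetab_0)\eb$ uniformly comparable across unit vectors, and verifying that the diagonal sequence can be chosen to serve every relevant direction at once --- while keeping $n_k u_k^2\Upsilon\to\infty$ --- is the step that will require the most care and is where the inhomogeneous case genuinely departs from the i.i.d.\ reduction of Theorem~\ref{t3}.
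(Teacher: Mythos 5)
Your proposal is correct and matches the paper's intended argument: the paper itself omits the proof, stating only that Corollary~\ref{ctu2} follows from Theorem~\ref{tu2} (equivalently from the iterated-limit testing bound of Corollary~\ref{ctu1}) by the same subsequence-extraction device used to derive Theorem~\ref{t3} from Theorems~\ref{t1} and \ref{t2}, which is exactly what you describe. Your added remarks --- that \eqref{cbu1} needs no consistency and that Assumption~\textbf{D} is what makes the diagonal choice uniform over directions $\eb$ --- are consistent with, and slightly more explicit than, the paper's presentation.
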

Proof of Theorem~\ref{tu2} is based on Theorem~\ref{tu1} and is akin to the proofs of Theorems~\ref{t0}--\ref{t2}. We omit the reasoning.

\begin{proof}[Proof of Theorem~\ref{tu1}]  
Reasoning basically repeats the proof of similar statements of Theorems 2.1 in \cite{er07} and Theorems 2.2 in \cite{er23}.
For $1 \le i \le n$ and $\varepsilon > 0$, define the events
\begin{equation}\label{qq26}
A_{ni} = A_{ni}(\varepsilon) = \{X_i: |u_n\phi_{ni}(X_{ni})| > \varepsilon\}, \quad D_{ni} = D_{ni}(\varepsilon) = \{X_i: |g_{ni}(X_{ni})| > \varepsilon\}. 
\end{equation}
Applying Chebyshev inequality and \eqref{qq21u}, we get
\begin{equation}\label{qq27}
\sum_{i=1}^n \mathbf{P}(A_{ni}) \le \varepsilon^2 u^2 \sum_{i=1}^n \int_S \phi_{ni}^2(x,\theta_0)
\mathbf{1}\Bigl(|\phi_{ni}(x,\theta_0) | > \varepsilon u_n^{-1}\Bigr)\,d\mu = o(u_n^2 \Upsilon_n).
\end{equation}
Applying \eqref{qq27}, we get
\begin{equation}\label{qq28}
\begin{split}
\sum_{i=1}^n \mathbf{P}(D_{ni}) &\le \sum_{i=1}^n \mathbf{P}(A_{ni}(\varepsilon/2)) \\
&+ \sum_{i=1}^n \mathbf{P}(|g_{ni}(X_{ni},u_n) - u_n\phi_{ni}(X_{ni})| > \varepsilon/2)= o(u_n^2 \Upsilon_n(\theta_0)),
\end{split}
\end{equation}
since, by virtue of Chebyshev inequality and the last estimate of the proof of Lemma 3.1, Ch.1 in \cite{ih}, we have
\begin{equation}\label{qq29}
\begin{split}
\sum_{i=1}^n & \mathbf{P}(|g_{ni}(X_{ni},u_n) - u_n\phi_{ni}(X_{ni})| > \varepsilon/2)\\
&\le 4\varepsilon^{-2}\sum_{i=1}^n \mathbf{E}(g_{ni}(X_{ni},u_n) - u_n^T\phib_{ni}(X_{ni}))^2 = o(u_n^2 \Upsilon_n(\theta_0)).
\end{split}
\end{equation}
For any event $A$, let $\bar A$ denote its complement.
Define event $U_n = (\cap_{i=1}^n \bar A_{ni}) \cap (\cap_{i=1}^n \bar D_{ni})$.
We have
\begin{equation}\label{qq30}
\begin{split}
\mathbf{P}(U_n) &= \prod_{i=1}^n (1 - \mathbf{P}(A_{ni}))\prod_{i=1}^n (1 - \mathbf{P}(D_{ni}))\\
&\ge \exp\Bigl\{-\sum_{i=1}^n (\mathbf{P}(A_{ni}) + \mathbf{P}(D_{ni}))\Bigr\} = \exp\{-o(nu_n^2)\}.
\end{split}
\end{equation}
For any events $A$ and $B$ denote $\mathbf{P}(A|B)$ the conditional probability of event $A$ given event $B$.
Denote
\begin{equation*}
\log L_n = \sum_{i=1}^n \log \Bigl(\frac{f_{ni}(X_{ni},\theta_0 +u_n)}{f_{ni}(X_{ni},\theta_0)}\Bigr).
\end{equation*}
Theorem~\ref{tu1} follows from \eqref{qq30} and Lemma~\ref{lem1} given below.
\begin{lemma} \label{lem1} 
Let assumptions of Theorem~\ref{tu1} be satisfied.
Then for any sequence $\delta_n$, $0< \delta_n <1$, $\delta_n u_n \Upsilon_n^{1/2}(\theta_0) \to \infty$ as $n \to \infty$ and any sequence $C_n$ such that $-(1-\delta_n) u_n \Upsilon_n^{1/2}(\theta_0)/2 < C_n$, we have
\begin{equation}\label{g12}
\log\mathbf{P} (u_n^{-1} \Upsilon_n^{-1/2}(\theta_0)\log L_n > C_n \,|\, U_n) = - \frac{( C_n+ u_n\Upsilon_n^{1/2}(\theta_0)/2)^2}{2}(1 + o(1))
\end{equation}
For any sequence $\delta_n$, $0< \delta_n <1$, $\delta_n u_n \Upsilon_n^{1/2}(\theta_0) \to \infty$ as $n \to \infty$ and any sequence $C_n$ such that $ C_n < (1-\delta_n)u_n \Upsilon_n^{1/2}(\theta_0)/2$, we have
\begin{equation}\label{g13}
\log\mathbf{P}_{\theta_0 +u_n} (u_n^{-1} \Upsilon_n^{-1/2}(\theta_0)\log L_n < C_n\,|\,U_n) = - \frac{( u_n \Upsilon_n^{1/2}(\theta_0)/2- C_n)^2}{2}(1 + o(1))
\end{equation}
as $n \to \infty$.
\end{lemma}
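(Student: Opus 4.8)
The plan is to work under the conditional measure $\mathbf{P}(\,\cdot\,|\,U_n)$. Because the $X_{ni}$ are independent and each $A_{ni},D_{ni}$ depends on $X_{ni}$ alone, the event $U_n=\cap_{i=1}^n B_{ni}$ with $B_{ni}\doteq\bar A_{ni}\cap\bar D_{ni}$ is a product event; under $\mathbf{P}(\,\cdot\,|\,U_n)$ the $X_{ni}$ stay independent, each distributed as $\mathbf{P}_{\theta_0 ni}$ restricted to $B_{ni}$ and renormalized. Writing $\log L_n=2\sum_{i=1}^n\log(1+g_{ni}(X_{ni},u_n))$ and using that $|g_{ni}|\le\varepsilon$ on $B_{ni}$, the summands become uniformly bounded by $2|\log(1-\varepsilon)|=O(\varepsilon)$, so the statement reduces to a moderate-deviation estimate for a sum of bounded independent terms. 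Since \eqref{qq21u} and the bounds \eqref{qq27}--\eqref{qq29} hold for every fixed $\varepsilon>0$, I would make a diagonal choice $\varepsilon=\varepsilon_n\to0$ slowly enough that $\sum_i\mathbf{P}(\bar B_{ni})=o(\sigma_n^2)$ and all Taylor remainders below remain negligible, where $\sigma_n^2\doteq u_n^2\Upsilon_n(\theta_0)$.

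Next I would pin down the Gaussian parameters. On $B_{ni}$ one has $2\log(1+g_{ni})=2g_{ni}-g_{ni}^2+O(\varepsilon_n g_{ni}^2)$. The $\mathbb{L}_2$-differentiability behind \eqref{a9}--\eqref{a10}, in the summable form established in the proof of Lemma~3.1, Ch.~1 of \cite{ih} (namely $\sum_i\mathbf{E}(g_{ni}(X_{ni},u_n)-u_n\phi_{ni})^2=o(u_n^2\Upsilon_n)$ together with the Hellinger normalization), supplies $\sum_i\mathbf{E}\,g_{ni}^2=\tfrac14\sigma_n^2(1+o(1))$ and $\sum_i\mathbf{E}\,g_{ni}=-\tfrac18\sigma_n^2(1+o(1))$. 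Hence
\begin{equation*}
m_n\doteq\mathbf{E}[\log L_n\,|\,U_n]=-\tfrac12\sigma_n^2(1+o(1)),\qquad s_n^2\doteq\operatorname{Var}(\log L_n\,|\,U_n)=\sigma_n^2(1+o(1)),
\end{equation*}
the corrections from truncation to $B_{ni}$ and renormalization by $\mathbf{P}(B_{ni})=1-o(1)$ being $o(\sigma_n^2)$ by Cauchy--Schwarz and \eqref{qq27}--\eqref{qq29}. This is the step where finite Fisher information is indispensable, the drift $-\tfrac12\sigma_n^2$ having to appear with exactly this constant.

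With the array so normalized, I would establish \eqref{g12} by the Cram\'er conjugate-measure method. Set $t_n\doteq C_n+\tfrac12 u_n\Upsilon_n^{1/2}(\theta_0)$; the constraint $C_n>-(1-\delta_n)u_n\Upsilon_n^{1/2}(\theta_0)/2$ gives $t_n>\tfrac12\delta_n u_n\Upsilon_n^{1/2}(\theta_0)\to\infty$, keeping us in the moderate zone, and $\{u_n^{-1}\Upsilon_n^{-1/2}\log L_n>C_n\}=\{\log L_n-m_n>\sigma_n t_n(1+o(1))\}$. For the upper bound I would apply the exponential Chebyshev inequality with tilt $\lambda=t_n/s_n$: expanding each conditional cumulant generating function as $\tfrac12\lambda^2\sigma_{ni}^2(1+O(\lambda\varepsilon_n))$ and summing, the relative size of the cubic remainder is $O(\lambda\varepsilon_n)=O(t_n\varepsilon_n/s_n)=O(\varepsilon_n)\to0$ since $t_n=O(s_n)$ in the moderate zone, and optimization returns $\log\mathbf{P}(\cdots\,|\,U_n)\le-\tfrac12 t_n^2(1+o(1))$. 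For the matching lower bound I would tilt the conditional law by the same $\lambda$, so that under the tilt the centred sum sits at the threshold with variance $\asymp s_n^2$; a one-sided Chebyshev (or local CLT) estimate under the tilt, inserted into the change-of-measure identity, returns the reverse inequality. Together these give \eqref{g12}.

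Inequality \eqref{g13} follows by running the identical argument under $\mathbf{P}_{\theta_0+u_n}(\,\cdot\,|\,U_n)$, where the conditional mean of $\log L_n$ becomes $+\tfrac12\sigma_n^2(1+o(1))$ and the conditional variance is again $\sigma_n^2(1+o(1))$; these expansions are legitimate because the uniform conditions \eqref{a10} and \eqref{qq21u} (cf. \eqref{q21u} with $c>2$ and the Remark following Theorem~\ref{tu2}) cover the whole $u_n$-range containing the alternative $\theta_0+u_n$. The constraint $C_n<(1-\delta_n)u_n\Upsilon_n^{1/2}(\theta_0)/2$ now makes the relevant threshold $\tfrac12 u_n\Upsilon_n^{1/2}(\theta_0)-C_n\to\infty$, and the Cram\'er argument reproduces the right-hand side of \eqref{g13}. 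I expect the main obstacle to be the second step: controlling the truncation induced by conditioning on $U_n$ so that it perturbs neither the drift nor the variance by more than $o(\sigma_n^2)$, and keeping these errors negligible against the Gaussian exponent $t_n^2$ (which is where the gap $\delta_n\,u_n\Upsilon_n^{1/2}(\theta_0)\to\infty$ is used), while choosing $\varepsilon_n\to0$ fast enough to kill the cubic CGF remainder yet slowly enough to preserve $\sum_i\mathbf{P}(\bar B_{ni})=o(\sigma_n^2)$ --- the coupling that makes the exact Gaussian constants emerge.
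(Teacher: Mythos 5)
Your proposal is correct and follows essentially the route the paper intends: the paper gives no argument for Lemma~\ref{lem1} beyond stating that, conditionally on $U_n$, the proof is ``practically identical to the proof of Theorem 2.2 in \cite{er23}'', i.e.\ a moderate-deviation local-asymptotic-normality statement for $\log L_n$ obtained exactly as you describe --- independence preserved by conditioning on the product event $U_n$, truncation to summands bounded by $O(\varepsilon_n)$, identification of the drift $\mp u_n^2\Upsilon_n(\theta_0)/2$ and variance $u_n^2\Upsilon_n(\theta_0)$ from the Hellinger normalization together with \eqref{qq27}--\eqref{qq29}, and Cram\'er tilting. The only caveat is that your Gaussian asymptotics implicitly require $C_n=O(u_n\Upsilon_n^{1/2}(\theta_0))$, a restriction the lemma does not state but which is the only regime in which \eqref{g12}--\eqref{g13} can hold (conditionally on $U_n$ the variable $\log L_n$ is bounded) and the only one needed to derive \eqref{a12}.
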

If event $U_n$ occurs, the proof of \eqref{g12} and \eqref{g13} is practically identical to the proof of Theorem 2.2 in \cite{er23}. We omit the reasoning.
\end{proof}

\subsection{Lower bound for estimating parameter of signal in Gaussian white noise \label{o232}}
In moderate deviation zone the problems of efficiency of statistical inference on the parameter of signal in Gaussian white noise have been explored in \cite{er13}. We obtained the lower bounds on the asymptotic efficiency similar to \eqref{s2201} for estimation and similar to \eqref{s270} for hypothesis testing. Thus, in \cite{er13} the lower bound for the asymptotically minimax setting of the signal parameter estimation has been established. At the same time, the analog of the lower bound \eqref{s270} for the hypothesis testing allows us to implement the technique of the proof of Theorems~\ref{t1}--\ref{t3} and to establish the lower bound of Bahadur's efficiency for signal estimation in the zone of moderate deviation probabilities.

Let we observe a realization of a random process $Y_\varepsilon(t)$, $t\in [0,1)$, $\varepsilon > 0$, defined by a stochastic differential equation
\begin{equation*}
d Y_\varepsilon(t) = S(t,\thetab) \,dt + \varepsilon\, dw(t).
\end{equation*}
Here $S(t,\thetab) \in \mathbb{L}_2(0,1)$ is a signal with unknown value of parameter $\thetab \in \Thetab \subset \mathbb{R}^d$ and $dw(t)$, $t \in [0,1)$ is Gaussian white noise. The set $\Thetab$ is open.
Suppose that, for any $\thetab_0 \in \Thetab$, the signal $S(t,\thetab)$ has the derivative in $\mathbb{L}_2(0,1)$ on $\thetab$ at the point $\thetab_0$, that is, there is such a vector function $\Sb_{\thetab}(t,\thetab_0) : [0,1) \to \mathbb{R}^d$, that we have
\begin{equation}\label{e231}
\int_0^1 (S(t,\thetab) - S(t,\thetab_0) - \Sb_\theta^T(t,\thetab_0)(\thetab - \thetab_0))^2 dt = o(|\thetab - \thetab_0|^2)
\end{equation}
as $\thetab \to \thetab_0$.
Fisher information matrix equals
\begin{equation*}
I(\thetab_0) = \int_0^1 \Sb(t,\thetab_0) \Sb^T(t,\thetab_0) \,dt
\end{equation*}
We say that a statistical experiment has finite Fisher information if Fisher information is positive definite and \eqref{e231} holds.

For signal parameter estimation all proofs of lower bounds on Bahadur asymptotic efficiency in the moderate deviation probability zone are based on the following analog of inequality \eqref{s270} for hypothesis testing established in \cite{er13}. We present it in the one-dimensional case.
Suppose we have a family $u_\varepsilon > 0$, $\varepsilon > 0$, such that $u_\varepsilon \to 0$, $\varepsilon^{-1}\,u_\varepsilon \to \infty$ as $\varepsilon \to 0$.
Let us consider the problem of testing hypothesis $H_0: \theta = \theta_0$ versus simple alternative $H_\varepsilon: \theta = \theta_0+u_\varepsilon$. For any test $K_\varepsilon$, $\varepsilon > 0$, denote $\alpha(K_\varepsilon)$ and $\beta(K_\varepsilon)$ respectively their type I and type II error probabilities. For any family of tests $K_\varepsilon$, $\varepsilon > 0$, by Theorem 2.1 in \cite{er13}, we have
\begin{equation}\label{su270}
\limsup_{\varepsilon \to 0}\, \varepsilon u_\varepsilon^{-1}I^{-1/2}(\theta_0)(|2 \log \alpha(K_\varepsilon)|^{1/2} + |2 \log \beta(K_\varepsilon)|^{1/2}) \le 1,
\end{equation}
if $\alpha(K_\varepsilon) < c < 1$ and $\beta(K_\varepsilon) < c < 1$.
Inequalities \eqref{s270} and \eqref{su270} coincide if we set $\varepsilon = n^{-1/2}$. This allows us to formulate Theorems~\ref{t1}--\ref{t3} almost identically. Let us demonstrate this on the base of Theorem~\ref{t2}.

We say that estimator $\hat\thetab_\varepsilon$ is $u_\varepsilon$-consistent, if for any $\thetab_0\in \Thetab$ there is a vicinity $U$ of $\thetab_0$, such that, for any $\delta > 0$, we have
\begin{equation*}
\lim_{\varepsilon \to 0} \sup_{\thetab \in U} \mathbf{P}_{\thetab}(\,|\hat\thetab_\varepsilon- \thetab| > \delta u_\varepsilon) = 0.
\end{equation*}
\begin{theorem}\label{tr2} 
Let statistical experiment have finite Fisher information for all $\thetab \in \Theta\subset\mathbb{R}^d$. Let estimator $\hat \thetab_\varepsilon$ be $u_\varepsilon$-consistent. Then, for any open set $\Vb \subset \mathbb{R}^d$, we have
\begin{equation}\label{sv25}
\liminf_{\varepsilon \to 0}\,\varepsilon^2 u_\varepsilon^{-2} \,\log \mathbf{P}_{\thetab} (\,\hat\thetab_\varepsilon - \thetab \in u_\varepsilon \Vb) \ge - \frac{1}{2}\inf_{\taub \in \Vb} \taub^T I(\thetab)\taub.
\end{equation}
\end{theorem}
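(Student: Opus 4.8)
The plan is to reproduce the proof of Theorem~\ref{t2} essentially verbatim, replacing the sample size quantity $n^{-1/2}$ by the noise level $\varepsilon$ and the two-point testing bound \eqref{s270} by its Gaussian white noise analogue \eqref{su270}. Since $\varepsilon u_\varepsilon^{-1} I^{-1/2}(\theta_0)$ in \eqref{su270} plays exactly the role of $(nv_n^2 I(\theta_0))^{-1/2}$ in \eqref{s270}, the whole reduction of parameter estimation to hypothesis testing transfers without change. The only genuinely new point to record is that \eqref{su270}, stated for a scalar alternative, may be invoked along an arbitrary direction $\taub_1$ with the effective one-dimensional Fisher information $\taub_1^T I(\thetab)\taub_1$.

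I would first fix the true value $\thetab$ and treat the case $0\notin cl(\Vb)$, postponing the degenerate case. Exactly as in the proof of Theorem~\ref{t2} (where, for clarity, one may normalize $I(\thetab)$ to the identity), I would take $\taub_0\in cl(\Vb)$ realizing $e=\inf_{\taub\in\Vb}\taub^T I(\thetab)\taub$, pick $\taub_1\in\Vb$ with $|\taub_1-\taub_0|<\delta$ and a radius $\lambda>0$ with $B(\taub_1,\lambda)\subset\Vb$, and choose $r_0>1$ so that $B(r\taub_1,(r-1)/2)\subset B(\taub_1,\lambda)$ for $1<r<r_0$. I would then set up the testing problem $H_0:\thetab=\thetab_0$ against $H_\varepsilon:\thetab=\thetab_0+\vb_\varepsilon$ with $\vb_\varepsilon=ru_\varepsilon\taub_1$, and introduce the test $K_\varepsilon=\mathbf{1}\{\hat\thetab_\varepsilon-\thetab_0\in u_\varepsilon\Vb\}$.

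Next I would bound both error probabilities away from $1$ by $u_\varepsilon$-consistency: since $\Vb$ is separated from the origin by some $\rho>0$, one gets $\alpha(K_\varepsilon)\le\mathbf{P}_{\thetab}(|\hat\thetab_\varepsilon-\thetab|>\rho u_\varepsilon)\to0$, while the inclusion $B(0,(r-1)u_\varepsilon/2)\subset u_\varepsilon\Vb-\vb_\varepsilon$ gives $\beta(K_\varepsilon)\le\mathbf{P}_{\thetab_0+\vb_\varepsilon}(|\hat\thetab_\varepsilon-\thetab_0-\vb_\varepsilon|>(r-1)u_\varepsilon/2)\to0$, so that \eqref{su270} is applicable. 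The scalar inequality would be used along the line $s\mapsto\thetab_0+s\taub_1$, whose sub-experiment has, by \eqref{e231}, derivative $\Sb^T(\cdot,\thetab_0)\taub_1$ and Fisher information $\int_0^1(\Sb^T(t,\thetab_0)\taub_1)^2\,dt=\taub_1^T I(\thetab)\taub_1$. Using $\alpha(K_\varepsilon)=\mathbf{P}_{\thetab}(\hat\thetab_\varepsilon-\thetab\in u_\varepsilon\Vb)$, discarding the nonnegative $\beta$-term in \eqref{su270} and squaring would yield
\begin{equation*}
\varepsilon^2 u_\varepsilon^{-2}\log\mathbf{P}_{\thetab}(\hat\thetab_\varepsilon-\thetab\in u_\varepsilon\Vb)\ge-\tfrac12 r^2\,\taub_1^T I(\thetab)\taub_1\,(1+o(1)).
\end{equation*}

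Finally I would pass to $\liminf_{\varepsilon\to0}$ and then let $r\downarrow1$ and $\delta\downarrow0$, so that $\taub_1\to\taub_0$ and $\taub_1^T I(\thetab)\taub_1\to e$, which gives \eqref{sv25}. For the degenerate case $0\in cl(\Vb)$ the right-hand side of \eqref{sv25} equals $0$: if $0\in\Vb$, then $u_\varepsilon$-consistency forces $\mathbf{P}_{\thetab}(\hat\thetab_\varepsilon-\thetab\in u_\varepsilon\Vb)\to1$ and the claim is immediate because $\varepsilon^2 u_\varepsilon^{-2}\to0$; if $0\in\partial\Vb$, I would apply the bound already obtained to balls $B(\taub_1,\lambda)\subset\Vb$ with $\taub_1\to0$ and use monotonicity of the probability in $\Vb$ to drive the lower bound to $0$. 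The main obstacle is the middle step, namely justifying that the scalar bound \eqref{su270} can be read off along the direction $\taub_1$ with the directional Fisher information $\taub_1^T I(\thetab)\taub_1$; in the Gaussian signal model this is transparent, since the log-likelihood ratio is exactly Gaussian and the restriction to a line is immediate, but it must still be written out explicitly.
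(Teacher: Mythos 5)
Your proposal is correct and follows essentially the same route the paper intends: the paper gives no separate proof of Theorem~\ref{tr2}, merely noting that \eqref{su270} coincides with \eqref{s270} under $\varepsilon=n^{-1/2}$ so that the argument of Theorem~\ref{t2} (two-point testing reduction, the test $K_\varepsilon=\mathbf{1}\{\hat\thetab_\varepsilon-\thetab\in u_\varepsilon\Vb\}$, consistency bounds on $\alpha$ and $\beta$, then $r\downarrow1$, $\delta\downarrow0$) transfers verbatim. Your added details --- the explicit reduction of \eqref{su270} to the line $s\mapsto\thetab_0+s\taub_1$ with directional Fisher information $\taub_1^TI(\thetab)\taub_1$ via \eqref{e231}, the use of $\rho=d(0,\Vb)$ in the $\alpha$-bound, and the treatment of $0\in cl(\Vb)$ --- only fill in steps the paper leaves implicit.
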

\begin{corollary}\label{ctr2} 
Let statistical experiment have finite Fisher information for all $\thetab \in \Theta\subset\mathbb{R}^d$. Let estimator $\hat \thetab_\varepsilon$ be consistent. Then, for any open set $\Vb \subset \mathbb{R}^d$, we have
\begin{equation}\label{csv25}
\liminf_{u\to 0}\liminf_{\varepsilon \to 0}\,\varepsilon^2 u^{-2} \,\log \mathbf{P}_{\thetab} (\,\hat\thetab_\varepsilon - \thetab \in u \Vb) \ge - \frac{1}{2}\inf_{\taub \in \Vb} \taub^T I(\thetab)\taub.
\end{equation}
\end{corollary}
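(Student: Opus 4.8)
The plan is to reproduce, in the Gaussian white noise model, the passage from Theorem~\ref{t2} to Theorem~\ref{t3} carried out for independent identically distributed observations, substituting the hypothesis-testing bound \eqref{su270} for \eqref{s270} and Theorem~\ref{tr2} for Theorem~\ref{t2}. The decisive point is that ordinary consistency, although weaker than the $u_\varepsilon$-consistency required in Theorem~\ref{tr2}, still supplies the two-point error estimates along a suitably coordinated sequence $(u_k,\varepsilon_k)$, after which the testing argument applies. As in the proof of Theorem~\ref{t3} it suffices to assume $0\notin cl(\Vb)$ and to carry out the directional argument of the proof of Theorem~\ref{t2}; I retain its notation, fixing $\taub_1\in\Vb$, $\lambda>0$ with $B(\taub_1,\lambda)\subset\Vb$, and $r_0>1$ with $B(r\taub_1,(r-1)/2)\subset B(\taub_1,\lambda)$ for $1<r<r_0$.

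First I would fix $1<r<r_0$ and, for each $u>0$, consider the points $\thetab_0$ and $\thetab_u=\thetab_0+ru\taub_1$ together with the test $K_\varepsilon=\mathbf{1}\{\hat\thetab_\varepsilon-\thetab_0\in u\Vb\}$. Since $0\notin cl(\Vb)$ the acceptance event $\{\hat\thetab_\varepsilon-\thetab_0\in u\Vb\}$ is contained in $\{|\hat\thetab_\varepsilon-\thetab_0|>\rho u\}$ for a fixed $\rho>0$, while $u\Vb$ contains the ball $B(u\,r\taub_1,(r-1)u/2)$ centred at the alternative, so the rejection event under $\thetab_u$ is contained in $\{|\hat\thetab_\varepsilon-\thetab_u|>(r-1)u/2\}$. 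Plain consistency at the two fixed points $\thetab_0,\thetab_u$ therefore gives, for each fixed $u$,
\[
\lim_{\varepsilon\to 0}\mathbf{P}_{\thetab_0}\bigl(\hat\thetab_\varepsilon-\thetab_0\in u\Vb\bigr)=0,\qquad \lim_{\varepsilon\to 0}\mathbf{P}_{\thetab_u}\bigl(\hat\thetab_\varepsilon-\thetab_0\notin u\Vb\bigr)=0 .
\]
Consequently, exactly as in the proof of Theorem~\ref{t3}, for every sequence $u_k\to 0$ one can select $\varepsilon_{0k}\to 0$ with $\varepsilon_{0k}^{-1}u_k\to\infty$ so that, along any $\varepsilon_k\le\varepsilon_{0k}$, both error probabilities tend to zero. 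The requirement $\varepsilon_{0k}^{-1}u_k\to\infty$ is the white noise analogue of $n_{0k}u_k^2\to\infty$ and places the pair $(u_k,\varepsilon_k)$ in the moderate-deviation regime in which \eqref{su270} is valid.

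Finally I would apply \eqref{su270} (in its directional form along $\taub_1$, as used in the proof of Theorem~\ref{tr2}) to the tests $K_{\varepsilon_k}$: their error probabilities are eventually below any $c<1$, so the hypotheses of \eqref{su270} hold, and the bound forces $\mathbf{P}_{\thetab_0}(\hat\thetab_\varepsilon-\thetab_0\in u_k\Vb)$ to decay no faster than $\exp\{-\tfrac12 r^2 u_k^2\varepsilon_k^{-2}\,\taub_1^T I(\thetab_0)\taub_1(1+o(1))\}$. Multiplying through by $\varepsilon_k^2 u_k^{-2}$, letting $k\to\infty$, and then letting $r\to 1$ and $\taub_1\to\taub_0$ (so that $\taub_1^T I(\thetab_0)\taub_1\to\inf_{\taub\in\Vb}\taub^T I(\thetab_0)\taub$) yields \eqref{csv25}. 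The main obstacle is precisely the coordinated extraction of the second paragraph: one must reconcile the moderate-deviation requirement $\varepsilon^{-1}u\to\infty$ of \eqref{su270} with the fact that plain consistency controls deviations only at a fixed radius, and confirm that $\varepsilon_{0k}^{-1}u_k\to\infty$ is compatible with the vanishing of the two error probabilities. Once the sequence is fixed the remaining steps are routine, because in the white noise model the local asymptotic normality underlying \eqref{su270} is exact, so no Lindeberg-type or singular-component estimates enter.
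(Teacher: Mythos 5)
Your proposal is correct and follows exactly the route the paper intends: Corollary~\ref{ctr2} is stated without explicit proof precisely because it is the white-noise transcription of the passage from Theorem~\ref{t2} to Theorem~\ref{t3} (with \eqref{su270} replacing \eqref{s270} and the diagonal extraction of $(u_k,\varepsilon_k)$ with $\varepsilon_k^{-1}u_k\to\infty$ replacing $n_{0k}u_k^2\to\infty$), which is what you carry out. The only blemish is terminological: the event $\{\hat\thetab_\varepsilon-\thetab_0\in u\Vb\}$ is the rejection region of $H_0$, not the acceptance event, but your containments and limits are the right ones.
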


\subsection{Multidimensional lower bound of Bahadur asymptotic efficiency \label{o24}}
Below we provide the lower bound for Bahadur asymptotic efficiency, which is different in different directions.
Estimator $\hat\thetab_n$ is called the consistent estimator of parameter $\thetab \in \Thetab \subset\mathbb{R}^d$, if for any $\thetab \in \Thetab$, for any $\varepsilon > 0$, we have
\begin{equation*}
\lim_{n\to \infty} \mathbf{P}_{\thetab} (\,|\hat\thetab_n - \thetab| > \varepsilon) =0.
\end{equation*}
\begin{theorem}\label{tr4} 
Let $\hat\thetab_n$ be a consistent estimator of parameter $\thetab \in \Thetab$. Let $\thetab_0 \in \Thetab$. Let $\Omega \subset \mathbb{R}^d$ be open set such that $0 \notin \Omega$ and $\thetab_0 + \Omega \subset \Thetab$.
Then, for any $\tilde\thetab \in \thetab_0 + \Omega$, we have
\begin{equation}\label{b1}
\lim_{n\to \infty}\frac{1}{n} \log \mathbf{P}_{\thetab_0} (\,\hat\thetab_n - \thetab_0 \in \Omega) \ge - \int_S \log\frac{f(x,\tilde\thetab)}{f(x,\thetab_0)} \, f(x,\tilde\thetab) \, \mu(dx).
\end{equation}
\end{theorem}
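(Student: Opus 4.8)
The plan is to run the classical Bahadur change-of-measure argument, now localized to the region $\Omega$ and the auxiliary point $\tilde\thetab = \thetab_0 + \taub$ with $\taub \in \Omega$. Abbreviate the right-hand side by the Kullback--Leibler divergence
\[
K = K(\tilde\thetab,\thetab_0) = \int_S \log\frac{f(x,\tilde\thetab)}{f(x,\thetab_0)}\, f(x,\tilde\thetab)\,\mu(dx).
\]
If $K = +\infty$ the asserted bound is vacuous, so I may assume $K < \infty$. Write $A_n = \{\hat\thetab_n - \thetab_0 \in \Omega\}$ for the event in question and introduce the log-likelihood ratio $L_n = \sum_{i=1}^n \log\frac{f(X_i,\tilde\thetab)}{f(X_i,\thetab_0)}$, which under $\mathbf{P}_{\tilde\thetab}$ is a sum of i.i.d.\ summands with mean $K$. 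Note that the hypothesis $\thetab_0 + \Omega \subset \Thetab$ guarantees $\tilde\thetab \in \Thetab$, so that $\mathbf{P}_{\tilde\thetab}$ is a legitimate member of the family and consistency of $\hat\thetab_n$ holds at $\tilde\thetab$.

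The core step is the one-sided change of measure
\[
\mathbf{P}_{\thetab_0}(A_n) \ge \mathbf{E}_{\tilde\thetab}\bigl[\mathbf{1}(A_n)\,e^{-L_n}\bigr],
\]
which holds without any absolute-continuity assumption, since on the set where the product $\prod_i f(X_i,\tilde\thetab)$ vanishes the excluded part of $\mathbf{P}_{\thetab_0}^n$ only contributes a nonnegative term. Fixing $\varepsilon > 0$ and restricting the expectation to $B_n = \{L_n < n(K+\varepsilon)\}$, on which $e^{-L_n} > e^{-n(K+\varepsilon)}$, I obtain
\[
\mathbf{P}_{\thetab_0}(A_n) \ge e^{-n(K+\varepsilon)}\,\mathbf{P}_{\tilde\thetab}(A_n \cap B_n).
\]
It then suffices to show $\mathbf{P}_{\tilde\thetab}(A_n \cap B_n) \to 1$, for then $\tfrac{1}{n}\log\mathbf{P}_{\thetab_0}(A_n) \ge -(K+\varepsilon) + o(1)$, and letting $\varepsilon \downarrow 0$ yields $\liminf_n \tfrac1n\log\mathbf{P}_{\thetab_0}(A_n) \ge -K$, which is \eqref{b1}.

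To see $\mathbf{P}_{\tilde\thetab}(B_n) \to 1$ I would invoke the weak law of large numbers for $L_n/n$ under $\mathbf{P}_{\tilde\thetab}$; the only integrability point to check is that the negative part of $\log\frac{f(X_1,\tilde\thetab)}{f(X_1,\thetab_0)}$ is integrable. This follows from the elementary bound $x \le e^x$ together with $\mathbf{E}_{\tilde\thetab}\bigl[f(X_1,\thetab_0)/f(X_1,\tilde\thetab)\bigr] = \int_{\{f(\cdot,\tilde\thetab)>0\}} f(x,\thetab_0)\,\mu(dx) \le 1$, so that the summand, combined with $K < \infty$, is genuinely integrable with mean $K$. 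For $\mathbf{P}_{\tilde\thetab}(A_n) \to 1$ I would use consistency: under $\mathbf{P}_{\tilde\thetab}$ one has $\hat\thetab_n \to \tilde\thetab$ in probability, hence $\hat\thetab_n - \thetab_0 \to \taub$; since $\Omega$ is open and $\taub \in \Omega$, there is a ball $B(\taub,\rho) \subset \Omega$, giving $\mathbf{P}_{\tilde\thetab}(A_n) \ge \mathbf{P}_{\tilde\thetab}(|\hat\thetab_n - \tilde\thetab| < \rho) \to 1$. Combining the two limits gives $\mathbf{P}_{\tilde\thetab}(A_n \cap B_n) \to 1$.

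The argument is entirely standard, matching \cite{ba80,ih}; the only places demanding a little care are the two convergences just described. The genuinely essential ingredient is the openness of $\Omega$ together with $\taub = \tilde\thetab - \thetab_0 \in \Omega$, which is exactly what allows consistency under the alternative $\tilde\thetab$ to force $\mathbf{P}_{\tilde\thetab}(A_n) \to 1$. No moderate-deviation machinery and no finite-Fisher-information hypothesis enters here: this is the large-deviation Bahadur bound, included to display that its proof is unchanged in the multidimensional ``one-sided'' formulation.
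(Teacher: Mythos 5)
Your proposal is correct and follows essentially the same route as the paper's proof: a change of measure to $\mathbf{P}_{\tilde\thetab}$, truncation of the log-likelihood ratio at $n(K+\varepsilon)$, the law of large numbers under $\tilde\thetab$, and consistency at $\tilde\thetab$ (using that $\tilde\thetab-\thetab_0$ lies in the open set $\Omega$) to make the truncated alternative probability tend to one. Your added checks --- the $K=\infty$ case and the integrability of the negative part of the log-likelihood ratio needed for the weak law --- are minor justifications the paper leaves implicit, not a different argument.
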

\begin{proof} 
Proof is akin to \cite{ba80, ih}. Denote $- K$ the righthand side of \eqref{b1}. By Jensen inequality, the righthand side of \eqref{b1} is not positive.
We define the indicator function $\lambda_n= \lambda_n(\hat\thetab_n -\thetab_0) = 1$, if $\hat\thetab_n -\thetab_0\in \Omega$ and $\lambda_n= \lambda_n(\hat\thetab_n -\thetab_0) = 0$, if $\hat\thetab_n -\thetab_0 \notin \Omega$.
We put $r = n(K + \delta)$ with $\delta >0$.
Denote
\begin{equation*}
G_n= G_n(X_1,\ldots,X_n,\thetab_0,\tilde\thetab) = \prod_{j=1}^n \frac{f(X_j,\tilde\thetab)}{f(X_j,\thetab_0)}.
\end{equation*}
We have
\begin{equation}\label{b2}
\begin{split}
\mathbf{P}_{\thetab_0}(\hat\thetab_n -\thetab_0\in \Omega) &= \mathbf{E}_{\thetab_0} \lambda_n\\
&\ge \mathbf{E}_{\thetab_0}(\lambda_n\mathbf{1}(G_n < \exp\{r\})) \ge \exp\{-r\} \mathbf{E}_{\tilde\thetab}\Bigl\{\lambda_n \mathbf{1}(G_n < \exp\{r\})\Bigr\}\\
&\ge \exp\{-r\} (\mathbf{P}_{\tilde\thetab}(\hat\thetab_n -\thetab_0\in \Omega) - \mathbf{P}_{\tilde\thetab}(G_n > \exp\{r\})).
\end{split}
\end{equation}
Since $\hat\thetab_n$ is consistent estimator, we have
\begin{equation}\label{b3}
\lim_{n \to \infty}\,\mathbf{P}_{\tilde\thetab}(\hat\thetab_n -\thetab_0\in \Omega) =1.
\end{equation}
By the Law of Large Numbers, we have
\begin{equation}\label{b4}
\lim_{n \to \infty}\mathbf{P}_{\tilde\thetab}\Bigl(\frac{1}{n}|G_n - n K| > \delta/2\Bigr) = 0.
\end{equation}
By \eqref{b2}--\eqref{b4}, we get \eqref{b1}.
\end{proof}
Theorems~\ref{t1}--\ref{tu2} and \ref{tr4} show that the super efficiency of estimators \cite{le, ih, va} in the zones of large and moderate deviation probabilities is possible only if the estimator is in some sense inconsistent.
Thus we can say that the emergence of super effective estimators satisfying consistency conditions is inherent only to the $n^{-1/2}$--deviation zone.

\section{Lower bound for moderate deviation probabilities of differentiable statistical functionals \label{o3}}
We implement the standard reasoning \cite{bic, er08,lev,st,va} for the transition from the lower bounds for asymptotic efficiency in parametric setting to the semiparametric setting. The transition is based on the choice of the least favourable direction in the functional space of parameters and the definition of parametric families of distributions having score function coinciding with this direction. The lower bound of asymptotic efficiency of estimation for this parametric family of distributions coincides with the lower bound for asymptotic efficiency in semiparametric estimation.

Here we consider a rather simple model of semiparametric estimation. Since we prove the lower bound of asymptotic efficiency for moderate deviations almost for the same setting as the setting in Hajek-Le Cam lower bound \cite{ha,le,ih,va} of asymptotic efficiency, then the unique problem is the definition of $u_n$-consistency. It is clear that this condition should be satisfied uniformly for the parameters of the least favourable family of distributions.
Let $(S,\mathcal{B})$ be a measurable space, let $\Lambda$ be the set of all probability measures on $(S,\mathcal{B})$. Let $X_{1},\ldots,X_{n}$ be i.i.d.r.v.'s with probability measure $\mathbf{P} \in \Lambda$. Suppose a priori information is given that $\mathbf{P} \in \Gamma \subseteq \Lambda $. Let the functional $T: \Lambda\to \mathbb{R}^{1}$ be defined.
We want to estimate the value of the functional $T(\mathbf{P})$ when it is known that $\mathbf{P} \in \Gamma $.
We implement the standard terminology (see \cite{bic,lev,va}).
For fixed $\mathbf{P} \in \Gamma$ denote $\Pi (\Gamma ,\mathbf{P})$ the set of all maps $\lambda :u\to \mathbf{P}_{u}$ from interval $(0,\delta )$ in $\Gamma $ satisfying \eqref{s21} for some function $\phi(x)= \phi_\lambda(x)$, $\phi\in \mathbb{L}_{2}(\mathbf{P})$.
Let $\Delta (\Gamma ,P)$ be the set of all functions $\phi_\lambda$, $\lambda \in \Pi (\Gamma ,\mathbf{P})$, satisfying \eqref{s21} and let $cl(\Delta(\Gamma ,\mathbf{P}))$ be the closure in $\mathbb{L}_{2}(\mathbf{P})$ of the set $\Delta (\Gamma ,\mathbf{P})$.
Define the linear space $\mathbb{L}(\Gamma ,\mathbf{P})$ as the closure in $\mathbb{L}_{2}(\mathbf{P})$ of the linear space induced by the functions $\varphi \in \Delta (\Gamma ,\mathbf{P})$. The linear space $\mathbb{L}(\Gamma ,\mathbf{P})$ can be interpreted as a tangent space of $\Gamma $ at a point $\mathbf{P}$ for the Hellinger metric.
We say that the function $\psi _{P}\in \mathbb{L}(\Gamma ,\mathbf{P})$, $\mathbf{E}_{\mathbf{P}} \psi_\mathbf{P}(X_1) = 0$, is the influence function of the functional $T$ in $\Gamma $ at a point $\mathbf{P} \in \Gamma $, if, for all $\lambda \in \Pi (\Gamma ,\mathbf{P})$, there holds
\begin{equation*}
T(\mathbf{P}_{u}) - T(\mathbf{P}) = u \mathbf{E} [\psi_{\mathbf{P}}(X_1) \phi_\lambda(X_1)] + o(u), \quad u\downarrow 0,
\end{equation*}
and $\mathbf{E}[\psi_{\mathbf{P}}(X_1)\, \phi(X_1)] =0$ for any function $\phi \in \mathbb{L}_2(\mathbf{P})$ orthogonal to all functions $\phi_\lambda \in \Delta(\Gamma ,\mathbf{P})$.
Make the following assumption.

\noindent\textbf{E.} For all $\mathbf{P} \in \Gamma$ there exists influence function $\psi_{\mathbf{P}} $ of functional $T$ in $\Gamma $ and $\psi_{\mathbf{P}}\in cl(\Delta(\Gamma ,\mathbf{P}))$.

Denote $I(\Gamma,\mathbf{P}) = (\mathbf{E}_{\mathbf{P}} \psi_{\mathbf{P}}^2(X_1))^{-1}$.
We say that estimator $\hat\theta_n=\hat\theta_n(X_1,\ldots,X_n)$ of a value of functional $T(\mathbf{P})$ is $u_n$-consistent, if, for any $\mathbf{P}_0\in \Gamma$, for any $\delta > 0$, we have
\begin{equation}\label{ququ10}
\lim_{n\to \infty} \sup_{\mathbf{P} \in U_\varepsilon} \mathbf{P}(\,|\hat\theta_n - T(\mathbf{P})| > \delta u_n) = 0,
\end{equation}
for any vicinity $U=U(T,\mathbf{P}_0) = \{\mathbf{P}: |T(\mathbf{P}) - T(\mathbf{P}_0)| < Cu_n, \mathbf{P} \in \Gamma\}$, $C> 3\delta$, of the point $\mathbf{P}_0\in \Gamma$.
If $\psi_{\mathbf{P}_0}\in\Delta (\Gamma ,\mathbf{P}_0)$, then we can replace $ U$ in \eqref{ququ10} with image $ \lambda_{\psi_{\mathbf{P}_0}} ((0,Cu_n))$.

\begin{theorem}
Assume \textbf{E}. Let $u_{n}\to 0$, $nu^{2}_{n}\to\infty $ as $n\to\infty $.
Then, for any $u_n$-consistent sequence of estimators $\hat\theta_{n}$, we have
\begin{equation*}
\liminf_{n\to\infty } (nu^{2}_{n}\,I(\Gamma,\mathbf{P})/2)^{-1}\log \mathbf{P}(|\hat\theta_{n}- T(\mathbf{P})|> u_{n}) \ge -1.
\end{equation*}
Moreover, we have
\begin{equation*}
\liminf_{n\to\infty } (nu^{2}_{n}\,I(\Gamma,\mathbf{P})/2)^{-1}\log \mathbf{P}(\hat\theta_{n}- T(\mathbf{P}) > u_{n}) \ge -1.
\end{equation*}
\end{theorem}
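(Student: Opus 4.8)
The plan is to reduce to the one–dimensional parametric bound of Theorem~\ref{t1} by constructing, through the true measure $\mathbf{P}$, a least favourable one–parameter family whose score is the influence function and whose natural parameter is the value of the functional itself, as announced at the start of this section. First I would invoke Assumption~\textbf{E}: since $\psi_{\mathbf{P}}\in cl(\Delta(\Gamma,\mathbf{P}))$, for each $\eta>0$ there is a path $\lambda\in\Pi(\Gamma,\mathbf{P})$, $u\mapsto\mathbf{P}_u$, whose score $\phi_\lambda$ obeys $\|\phi_\lambda-\psi_{\mathbf{P}}\|_{\mathbb{L}_2(\mathbf{P})}<\eta$; by the definition of $\Pi(\Gamma,\mathbf{P})$ this family satisfies the finite–Fisher–information condition \eqref{s21} at $u=0$, and by the definition of the influence function $T(\mathbf{P}_u)-T(\mathbf{P})=u\,\mathbf{E}[\psi_{\mathbf{P}}\phi_\lambda]+o(u)$, with $\mathbf{E}[\psi_{\mathbf{P}}\phi_\lambda]\neq0$ once $\eta$ is small. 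Reindexing the path by $\vartheta=T(\mathbf{P}_u)$ turns it into a parametric family in which the unknown parameter is precisely the estimand $T(\mathbf{P})$; I would check that this reparametrised family again satisfies \eqref{s21}, so that Theorem~\ref{t1} applies to it, and that the exponent it produces is, up to the normalisation fixed by \eqref{s21}, controlled by the ratio $\mathbf{E}[\phi_\lambda^2]/\mathbf{E}[\psi_{\mathbf{P}}\phi_\lambda]^2$.

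Next I would run the testing argument of Theorem~\ref{t1} along this family. With $K_n=\mathbf{1}(|\hat\theta_n-T(\mathbf{P})|>u_n)$ and the alternative taken at the member $\mathbf{Q}_n$ of the family with $T(\mathbf{Q}_n)=T(\mathbf{P})+r u_n$, $r>1$, the $u_n$–consistency of $\hat\theta_n$ forces $\alpha(K_n)\to0$, and, because under $\mathbf{Q}_n$ the estimand is shifted by $r u_n$, the acceptance event $\{|\hat\theta_n-T(\mathbf{P})|\le u_n\}$ forces $|\hat\theta_n-T(\mathbf{Q}_n)|\ge (r-1)u_n$, whence $\beta(K_n)\le\mathbf{Q}_n(|\hat\theta_n-T(\mathbf{Q}_n)|>(r-1)u_n)\to0$ as well. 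Here it is essential that $\mathbf{Q}_n$ stay inside the neighbourhood $U(T,\mathbf{P}_0)$ on which $u_n$–consistency is uniform, which is exactly what the requirement $C>3\delta$ secures once $r$ is close to $1$. Applying \eqref{s270} to this family, discarding the $\beta$–term, and letting $r\downarrow1$ yields the moderate–deviation lower bound with exponent governed by the ratio identified above.

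Finally I would optimise over the path. By the Cauchy--Schwarz inequality,
\[
\frac{\mathbf{E}[\phi_\lambda^2]}{\mathbf{E}[\psi_{\mathbf{P}}\phi_\lambda]^2}\ \ge\ \frac{1}{\mathbf{E}[\psi_{\mathbf{P}}^2]}\ =\ I(\Gamma,\mathbf{P}),
\]
with equality approached as $\phi_\lambda\to\psi_{\mathbf{P}}$; Assumption~\textbf{E} — that $\psi_{\mathbf{P}}$ lies in $cl(\Delta(\Gamma,\mathbf{P}))$, not merely in the tangent space $\mathbb{L}(\Gamma,\mathbf{P})$ — is precisely what guarantees this infimum is realised in the limit by genuine one–dimensional paths, so the least favourable exponent is governed by $I(\Gamma,\mathbf{P})$ and \eqref{s23} follows. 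The one–sided inequality is obtained identically, replacing $K_n$ by $\mathbf{1}(\hat\theta_n-T(\mathbf{P})>u_n)$ as in the derivation of \eqref{s24}. The step I expect to be the main obstacle is the transfer of $u_n$–consistency from the functional to the approximating submodels — verifying uniformly over $U(T,\mathbf{P}_0)$ that $\alpha(K_n),\beta(K_n)<c<1$ — together with the interchange of the three limits $n\to\infty$, $r\downarrow1$ and $\phi_\lambda\to\psi_{\mathbf{P}}$; when $\psi_{\mathbf{P}}\in\Delta(\Gamma,\mathbf{P})$ this simplifies, by the remark following the definition of $u_n$–consistency, to uniform consistency along the single least favourable path $\lambda_{\psi_{\mathbf{P}}}$.
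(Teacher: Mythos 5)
Your proposal follows exactly the route the paper announces at the start of Section~\ref{o3} --- reduction to the one--dimensional parametric bound of Theorem~\ref{t1} along a least favourable one--parameter submodel whose score approximates the influence function, reparametrisation by the value of the functional, the testing argument via \eqref{s270}, and the Cauchy--Schwarz optimisation over paths permitted by Assumption \textbf{E} --- and since the paper supplies no further detail for this theorem, your argument is in fact more explicit than the paper's own. The only point to watch is the bookkeeping of constants: with the normalisation of $\phi_\lambda$ fixed by \eqref{s21} (so that the parametric information is $4\,\mathbf{E}[\phi_\lambda^2]$), your reduction yields the exponent $2\,n u_n^2\, I(\Gamma,\mathbf{P})$ rather than $n u_n^2\, I(\Gamma,\mathbf{P})/2$; this factor-of-four discrepancy already appears when one specialises the statement to the parametric case $T(\mathbf{P}_\theta)=\theta$ and compares with Theorem~\ref{t1}, so it is an inconsistency in the paper's normalisation of $I(\Gamma,\mathbf{P})$ rather than a gap in your argument.
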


We call estimator $\hat\theta_n$ consistent, if, for any $\mathbf{P}_0\in \Gamma$, for any $\delta > 0$, we have
\begin{equation}\label{ququ15}
\lim_{n\to \infty} \mathbf{P}_0(\,|\hat\theta_n - T(\mathbf{P}_0)| > \delta) = 0.
\end{equation}
\begin{theorem}\label{t10}  
Assume \textbf{E}. Then, for any consistent estimator $\hat\theta_n$, we have
\begin{equation*}
\liminf_{u\to 0}\lim_{n\to \infty}(n\,u^2\,I(\Gamma,\mathbf{P}_0)/2)^{-1} \log \mathbf{P}_0 (\,|\hat\theta_n - T(\mathbf{P}_0)| > u ) \ge - 1.
\end{equation*}
Moreover, we have
\begin{equation*}
\liminf_{u\to 0}\lim_{n\to \infty}(n\,u^2\,I(\Gamma,\mathbf{P}_0)/2)^{-1} \log \mathbf{P}_0 (\,\hat\theta_n - T(\mathbf{P}_0) > u ) \ge - 1.
\end{equation*}
\end{theorem}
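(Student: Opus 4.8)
The plan is to deduce Theorem~\ref{t10} from its $u_n$-consistent counterpart stated just above in the same way that Theorem~\ref{t3} was deduced from Theorem~\ref{t1}: the reduction to a one-dimensional least favourable submodel is reused, and the only new ingredient is a diagonal argument turning the pointwise consistency \eqref{ququ15} into the error-probability conditions under which the hypothesis-testing inequality \eqref{s270} may be invoked. I carry out the reasoning for the two-sided bound; the one-sided ``moreover'' statement follows by the same argument applied to the test $K_n = \mathbf{1}(\hat\theta_n - T(\mathbf{P}_0) > u)$.

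First I would fix $\mathbf{P}_0 \in \Gamma$ and, using Assumption~\textbf{E}, select for each $\eta > 0$ a map $\lambda \in \Pi(\Gamma,\mathbf{P}_0)$ whose score $\phi_\lambda \in \Delta(\Gamma,\mathbf{P}_0)$ obeys $\mathbf{E}(\psi_{\mathbf{P}_0} - \phi_\lambda)^2 < \eta$, and pass to the one-dimensional submodel $u \mapsto \mathbf{P}_u$ contained in $\Gamma$. Along this submodel the defining relation of the influence function gives $T(\mathbf{P}_u) - T(\mathbf{P}_0) = u\,\mathbf{E}[\psi_{\mathbf{P}_0}\phi_\lambda] + o(u)$, with $\mathbf{E}[\psi_{\mathbf{P}_0}\phi_\lambda] \to \mathbf{E}\psi_{\mathbf{P}_0}^2 = I(\Gamma,\mathbf{P}_0)^{-1}$ as $\eta \to 0$, so that estimating the functional value along the submodel becomes a one-dimensional parametric problem whose effective information for $T$ is governed by $I(\Gamma,\mathbf{P}_0)$; this is exactly the bookkeeping performed in the preceding theorem, and \eqref{s270} applies to the pair $\mathbf{P}_0$, $\mathbf{P}_u$.

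Second, I would extract the requisite error-probability estimates from consistency alone. Fix $r > 1$. Consistency \eqref{ququ15} at $\mathbf{P}_0$ yields $\lim_{n\to\infty}\mathbf{P}_0(\hat\theta_n - T(\mathbf{P}_0) > u) = 0$ for each fixed $u > 0$, while consistency at the alternative submodel point --- whose functional value exceeds $T(\mathbf{P}_0)$ by $(1+o(1))\,ru > u$ --- forces the type II error of $K_n = \mathbf{1}(|\hat\theta_n - T(\mathbf{P}_0)| > u)$ to tend to $0$. Then, exactly as in the proof of Theorem~\ref{t3}, for every sequence $u_k \downarrow 0$ one may choose thresholds $n_{0k}$ with $n_{0k} u_k^2 \to \infty$ such that along any $n_k > n_{0k}$ both probabilities tend to $0$, and in particular are eventually $< c < 1$. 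Invoking \eqref{s270} for the tests $K_{n_k}$, then letting $k \to \infty$, afterwards $r \downarrow 1$ and $\eta \downarrow 0$, produces the bound, the outer $\liminf_{u\to 0}$ absorbing the arbitrariness of $u_k$.

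The main obstacle is the gap between the two weakenings relative to the $u_n$-consistent theorem. Pointwise consistency \eqref{ququ15} does not provide the uniformity over the shrinking neighbourhood $U$ built into the definition \eqref{ququ10} of $u_n$-consistency, so the preceding theorem cannot be quoted directly; the diagonal construction of $(u_k,n_k)$ is precisely what replaces that uniformity, and one must choose $n_{0k}$ large enough to force convergence of both error probabilities while still keeping $n_{0k}u_k^2 \to \infty$, so that the moderate-deviation regime $nu^2 \to \infty$ persists along the subsequence. One must also verify that the two approximation errors --- the $o(u)$ remainder in $T(\mathbf{P}_u) - T(\mathbf{P}_0)$ and the $\eta$-approximation of $\psi_{\mathbf{P}_0}$ by $\phi_\lambda$ --- enter only through a factor $1+o(1)$ and disappear under the iterated limit, so that the constant $-1$ is preserved.
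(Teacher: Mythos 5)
Your proposal is correct and takes essentially the same route the paper intends: the paper gives no written proof of Theorem~\ref{t10}, relying instead on the Section~\ref{o3} preamble (reduction to a least favourable one-dimensional submodel whose score approximates $\psi_{\mathbf{P}_0}$ in $\mathbb{L}_2(\mathbf{P}_0)$, then the hypothesis-testing bound \eqref{s270}) combined with the diagonal argument of the proof of Theorem~\ref{t3} to pass from pointwise consistency to the error-probability conditions. Your write-up is a faithful, and in fact more explicit, instantiation of exactly that plan, including the two points the paper leaves implicit (the $\eta$-approximation of the influence function and the choice of $n_{0k}$ with $n_{0k}u_k^2\to\infty$).
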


\section{Asymptotic efficiency for moderate deviations and local Bahadur asymptotic efficiency \label{o4}}
Bahadur asymptotic efficiency \cite{ba60,ba67,ba80} is well-known measure of quality of test and estimators. Exploration of Bahadur efficiency is a rather hard technical problem \cite{nik} and, often, is replaced with the study of local Bahadur asymptotic efficiency (see \cite{dni, er03, ih} and many other papers). In \cite{er03} we pay attention that the statements on local Bahadur asymptotic efficiency follows from similar Theorems on moderate deviation probabilities. Below we present this statement in explicit form.

In \cite{er98, er07}, we show that technique of Fréchet differentiability of statistical functionals can be implemented to the study of moderate deviations probabilities to the same extent as in the case of proving asymptotic normality \cite{se}. In \cite{gao} this result has been extended to the case of Hadamard differentiability. Thus the results \cite{er98, er07, gao} are valid for the setting of local Bahadur efficiency as well.

Let we need to estimate parameter $\thetab \in \Thetab \subset \mathbb{R}^d$. We say that estimator $\hat\thetab_n = \hat\thetab_n(X_1,\ldots,X_n)$ satisfies Moderate Deviation Principle with rate function $I \,:\, \mathbb{R}^d \to \mathbb{R}_+$, if there hold
\begin{itemize}
\item[\it i.] for any $L > 0$ the set $\{\xb: I(\xb) < L\}$ is compact,
\item[\it ii.] for any sequence $u_n > 0$, $u_n \to 0$, $nu_n^2 \to \infty$, for any open set $\Gb \subset \mathbb{R}^d$, we have
\begin{equation*}
\liminf_{n \to \infty} (nu_n^2/2)^{-1}\log \mathbf{P}_{\thetab*}(\hat\thetab_n - \thetab \in u_n \Gb) \ge -\inf_{\xb \in \Gb} I(\xb),
\end{equation*}
\item[\it iii.] for any sequence $u_n > 0$, $u_n \to 0$, $nu_n^2 \to \infty$, for any closed set $\Fb \subset \mathbb{R}^d$, we have
\begin{equation*}
\limsup_{n \to \infty} (nu_n^2/2)^{-1}\log \mathbf{P}_{\thetab}^*(\hat\thetab_n - \thetab \in u_n \Fb) \le -\inf_{\xb \in \Fb} I(\xb).
\end{equation*}
\end{itemize}
Here $\mathbf{P}_{\thetab*}(A)$ and $\mathbf{P}_{\thetab}^*(A)$ denote outer and inner probabilities of a set $A \subset \mathbb{R}^d$.

We say that estimator $\hat\thetab_n = \hat\thetab_n(X_1,\ldots,X_n)$ satisfies local Bahadur Large Deviation Principle with rate function $I \,:\, \mathbb{R}^d \to \mathbb{R}_+$, if there hold
\begin{itemize}
\item[\it i.] for any $L > 0$ the set $\{\xb: I(\xb) < L\}$ is compact,
\item[\it ii.] for any open set $\Gb \subset \mathbb{R}^d$, we have
\begin{equation*}
\lim_{u\to 0}\liminf_{n \to \infty} (nu^2/2)^{-1}\log \mathbf{P}_{\thetab*}(\hat\thetab_n - \thetab \in u \Gb) \ge -\inf_{\xb \in \Gb} I(\xb),
\end{equation*}
\item[\it iii.] for any closed set $\Fb \subset \mathbb{R}^d$, we have
\begin{equation*}
\lim_{u\to 0}\limsup_{n \to \infty} (nu^2/2)^{-1}\log \mathbf{P}_{\thetab}^*(\hat\thetab_n - \thetab \in u \Fb) \le -\inf_{\xb \in \Fb} I(\xb).
\end{equation*}
\end{itemize}

\begin{theorem} \label{tb1} 
Let estimator $\hat\theta_n$ satisfy Moderate Deviation Principle with rate function $I$. Then $\hat\theta_n$ satisfies local Bahadur Large Deviation Principle with the rate function $I$ as well.
\end{theorem}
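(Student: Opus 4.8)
The plan is to prove the two substantive items (ii) and (iii) of the local Bahadur principle separately; item (i) is literally the same compactness hypothesis on the sublevel sets $\{\xb : I(\xb) < L\}$ in both definitions, so it transfers verbatim. The conceptual point is that the Moderate Deviation Principle is an assertion \emph{quantified over every} admissible sequence $u_n\to 0$ with $nu_n^2\to\infty$, whereas the local Bahadur statement keeps $u$ fixed, lets $n\to\infty$, and only afterwards sends $u\to 0$. For a fixed $u$ the constant sequence $u_n\equiv u$ is \emph{not} admissible for the MDP, since it does not tend to zero, so the MDP cannot be invoked directly; the gap will be bridged by a diagonal extraction.

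For the lower bound (ii), fix an open set $\Gb$ and write $A = -\inf_{\xb\in\Gb} I(\xb)$ and $f(u,n) = (nu^2/2)^{-1}\log\mathbf{P}_{\thetab*}(\hat\thetab_n - \thetab \in u\Gb)$. I would argue by contradiction. If $\liminf_{u\to 0}\liminf_{n\to\infty} f(u,n) < A$, then there are $\delta > 0$ and a sequence $u_k\downarrow 0$ with $\liminf_{n\to\infty} f(u_k,n) < A - \delta$ for every $k$. For each $k$ this inner $\liminf$ being below $A-\delta$ furnishes infinitely many indices $n$ with $f(u_k,n) < A-\delta$; among them I pick $n_k$ so large that $n_k > n_{k-1}$ and $n_k u_k^2 > k$. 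Defining the piecewise-constant sequence $\tilde u_n = u_k$ for $n_k \le n < n_{k+1}$ (and $\tilde u_n = u_1$ for $n < n_1$) yields an admissible sequence: $\tilde u_n\to 0$ since $u_k\to 0$, and $n\tilde u_n^2 \ge n_k u_k^2 > k \to \infty$. Applying item (ii) of the MDP to $\tilde u_n$ gives $\liminf_{n} (n\tilde u_n^2/2)^{-1}\log\mathbf{P}_{\thetab*}(\hat\thetab_n-\thetab \in \tilde u_n\Gb) \ge A$; but along $n=n_k$ this quantity equals $f(u_k,n_k) < A-\delta$, so its $\liminf$ is at most $A-\delta$, a contradiction. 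Hence $\liminf_{u\to 0}\liminf_n f(u,n)\ge A$, which is exactly the one-sided bound of item (ii).

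The upper bound (iii) is entirely parallel, now with inner probabilities and a closed set $\Fb$. Set $B = -\inf_{\xb\in\Fb} I(\xb)$ and $g(u,n) = (nu^2/2)^{-1}\log\mathbf{P}_{\thetab}^*(\hat\thetab_n-\thetab \in u\Fb)$. If $\limsup_{u\to 0}\limsup_{n\to\infty} g(u,n) > B$, I choose $\delta>0$ and $u_k\downarrow 0$ with $\limsup_n g(u_k,n) > B+\delta$; the $\limsup$ now produces infinitely many $n$ with $g(u_k,n) > B+\delta$, and the same diagonal construction of $\tilde u_n$ contradicts item (iii) of the MDP. The main, and in fact the only real, obstacle is the bookkeeping of the diagonalization, namely guaranteeing simultaneously that $\tilde u_n\to 0$, that $n\tilde u_n^2\to\infty$, and that the violating pairs $(u_k,n_k)$ persist as a subsequence along which the MDP bound is contradicted; the monotone choice $n_k u_k^2 > k$ together with the piecewise-constant interpolation settles all three requirements cleanly.
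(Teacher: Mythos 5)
Your proof is correct and follows essentially the same route as the paper's: the paper likewise reduces the iterated limit to statements along subsequences $(n_k,v_k)$ with $v_k\to 0$ and $n_kv_k^2\to\infty$ and then invokes the Moderate Deviation Principle for the sequence $u_{n_k}=v_k$. Your version merely makes the diagonal extraction and the admissibility of the interpolated sequence $\tilde u_n$ explicit via a contradiction argument, which is a more careful writeup of the same idea.
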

\begin{proof}[Proof of Theorem~\ref{tb1}] 
Let we take arbitrary sequence $v_k \to 0$ as $k \to \infty$. For proof of local Bahadur Large Deviation Principle, it suffices to show that, for any subsequence $n_k \to \infty$ and $n_k v_k^2 \to \infty$ as $k \to \infty$, there hold
\begin{equation*}
\liminf_{k \to \infty} (n_k v_k^2/2)^{-1}\log \mathbf{P}_{\thetab*}(\hat\thetab_{n_k} - \thetab \in v_k G) \ge -\inf_{\xb \in \Gb} I(\xb)
\end{equation*}
and
\begin{equation*}
\limsup_{k \to \infty} (n_k v_k^2/2)^{-1}\log \mathbf{P}_{\thetab}^*(\hat\thetab_{n_k} - \thetab \in v_k F) \le -\inf_{\xb \in \Fb} I(\xb).
\end{equation*}
However this is Moderate Deviation Principle, if we put $u_{n_k}= v_k$.
\end{proof}


\begin{thebibliography}{99}
\bibitem{ba60} R.R. Bahadur, {\it Asymptotic efficiency of tests and estimates}. Sankhya. {\bf 22} (1960) 229--252.
\bibitem{ba67} R.R. Bahadur, {\it Rates of convergence of estimates and test statistics}. Ann.Math.Stat. {\bf 38}, (1967), 303--324.
\bibitem{ba80} R.R. Bahadur, J.C. Gupta and S.L. Zabel. {\it Large deviations of tests and estimates}. Asymptotic Theory of Statistical Tests and Estimation. (ed. I.M. Chakravari) Academic NY (1980), 33--64.
\bibitem{bic} P. Bickel, C. Klaassen, Y. Ritov, J. A. Wellner, {\it Efficient and adaptive estimation for the semiparametric models}. John Hopkins University Press, Baltimore 1993.
\bibitem{bm} A. A. Borovkov, A. A. Mogulskii. Large deviations and testing statistical hypothesis. Siber. Adv. Math. 1992 {\bf 2} 1992; {\bf 3} 1993.
\bibitem{dni} A. Dario, Ya. Yu. Nikitin, {\it Local efficiency of integrated goodness of fit tests under skew alternatives}. Stat.Prob.Let. {\bf 117}, (2016), 136--143.
\bibitem{er98} M.S. Ermakov, {\it On lower bounds of moderate large deviations of tests and estimators}. J. Math. Sci., {\bf 88}(1), (1998), 36--42.
\bibitem{er03} M.S. Ermakov, {\it Asymptotically efficient statistical inference for moderate deviation probabilities}. Theory Probab. Appl. {\bf 48(4)}, (2003), 676--700.
\bibitem{er07} M. S. Ermakov, {\it Importance sampling for simulation of moderate deviations of statistics}. Statistics and Decision. {\bf 25}, (2007), 265--284.
\bibitem{er08} M.S. Ermakov, {\it On semiparametric statistical inferences in the moderate deviation zone}. J. Math. Sci. {\bf 152}, (2008), 869--874.
\bibitem{er12} M.S. Ermakov, {\it The sharp lower bounds of asymptotic efficiency of estimators in the zone of moderate deviation probabilities}. Electronic Journal of Statistics. {\bf 6}, (2012), 2150--2184.
\bibitem{er13} M. S. Ermakov, {\it On asymptotically efficient statistical inference on a signal parameter}. J. Math. Sci. (N. Y.), {\bf 206:2} (2015), 159--171.
\bibitem{er23} M. S. Ermakov, {\it Local asymptotic normality of logarithm of likelihood ratio in the zone of moderate deviation probabilities}. Zapiski Nauchn.Semin.POMI. {\bf 525}, (2023), 71--85.
\bibitem{fu} J.C. Fu {\it On a theorem of Bahadur on rate of convergence of point estimators}. Ann.Stat. {\bf 4}, (1973), 745--749.
\bibitem{gao} F.Gao, X.Zhao {\it Delta Method in Large and Moderate Deviations for Estimators}. Ann.Stat. {\bf 39}, (2011), 1211--1240.
\bibitem{ha} J.Hajek, {\it Local asymptotic minimax and admissibility in estimation}. Proc.Sixth Berkeley Symp. on Math. Statist. and Probab. {\bf 1}, Berkeley, California Univ. Press. (1972), 175--194.
\bibitem{ih} I.A.Ibragimov, R.Z. Hasminskii, {\it Statistical Estimation: Asymptotic Theory.} Berlin: Springer. 1981.
\bibitem{lev} Yu. A. Koshevnik, B. Ya. Levit, {\it On nonparametric version of information matrix}. Theory Probab.Appl. {\bf 21}, (1976), 738--753.
\bibitem{le} L. Le Cam, {\it Asymptotic Methods in Statistical Decision Theory}. Berlin: Springer. 1986.
\bibitem{nik} Ya. Yu. Nikitin, {\it Asymptotic efficiency of nonparametric tests}. Cambridge, UK: Cambridge Univ. Press. 1995.
\bibitem{se} R. J. Serfling, {\it Approximation Theorems of Mathematical Statistics}. Wiley, New York. 1980.
\bibitem{spok} A. Puhalskii, V. Spokoiny, {\it On large-deviation efficiency in statistical inference}. Bernoulli. {\bf 4}, (1998), 203--272.
\bibitem{st} Ch. Stein, {\it Efficient nonparametric testing and estimation}. Third Berkeley Symp. Math. Statist. and Probab. Berkeley, Univ. California Press. {\bf 1}, (1956), 187--195.
\bibitem{ra} M. Radavichius, {\it From asymptotic efficiency in minimax sense to Bahadur efficiency}. New Trends in Probab. and Statist. V. Sazonov and T. Shervashidze (Eds) Vilnius, VSP/Mokslas {\bf 1}, (1991), 629--635.
\bibitem{va} A.W. Van der Vaart, {\it Asymptotic Statistics}. Cambridge, UK: Cambridge Univ. Press. 1998.
\end{thebibliography}
\end{document}